\title{Cyclic sieving phenomenon on annular noncrossing permutations}
\author{Jang Soo Kim}
\date{\today}
\newtheorem{thm}{Theorem}[section]
\newtheorem{lem}[thm]{Lemma}
\newtheorem{prop}[thm]{Proposition}
\newtheorem{cor}[thm]{Corollary}
\theoremstyle{definition}
\newtheorem{defn}{Definition}
\theoremstyle{remark}
\newtheorem{remark}{Remark}
\newcommand\flr[1]{\left\lfloor #1\right\rfloor}
\newcommand\ceil[1]{\left\lceil #1\right\rceil}
\newcommand\Qbinom[3]{\genfrac{[}{]}{0pt}{}{#1}{#2}_{#3}}
\newcommand\qbinom[2]{\Qbinom{#1}{#2}{q}}
\newcommand\A{\mathcal{A}}
\newcommand\B{\mathcal{B}}
\newcommand\anc{\operatorname{ANC}}
\newcommand\Par{\operatorname{Par}}
\newcommand\Kre{\operatorname{Kre}^{\mathrm{ann}}}
\newcommand\Nara{\operatorname{Nara}^{\mathrm{ann}}}
\newcommand\Cat{\operatorname{Cat}^{\mathrm{ann}}}
\newcommand\N{\mathbb{N}}
\newcommand\Z{\mathbb{Z}}
\def\evput#1#2#3#4{\rput(#1;#2){\cnode*(0,0){2pt}{#3}} 
\uput{#1.5}[#2](0,0){#4}}
\def\ivput#1#2#3#4{\rput(#1;#2){\cnode*(0,0){2pt}{#3}} 
\uput{#1}[#2](0,0){\rput(-.8;#2){#4}}}
\def\edge#1#2{\ncline{->}{#1}{#2}}
\begin{document}

\begin{abstract}
  We show an instance of the cyclic sieving phenomenon on annular noncrossing
  permutations with given cycle types.  We define annular $q$-Kreweras numbers,
  annular $q$-Narayana numbers, and annular $q$-Catalan number, all of which are
  polynomials in $q$. We then show that these polynomials exhibit the cyclic
  sieving phenomenon on annular noncrossing permutations. We also show that a
  sum of annular $q$-Kreweras numbers becomes an annular $q$-Narayana number and
  a sum of $q$-Narayana numbers becomes an annular $q$-Catalan number.
\end{abstract}

\maketitle

\section{Introduction}

Let $\pi$ be a permutation of $[n]=\{1,2,\dots,n\}$. One can represent $\pi$
inside a disk as shown in Figure~\ref{fig:disk}.  If the arrows of the diagram
of $\pi$ are noncrossing and if every cycle of $\pi$ is oriented clockwise, then
$\pi$ is called a \emph{noncrossing permutation}. If we replace each cycle with
a block, then we get a bijection from noncrossing permutations and noncrossing
partitions.

\begin{figure}
  \centering
\begin{pspicture}(-6,-6)(6,6) 
\psset{linewidth=1.5pt, nodesep =2pt}
\pscircle[linecolor=gray](0,0){5}
\multido{\n=90+-36,\i=1+1}{10}{\evput{5}{\n}{\i}{\i}}
\edge13 \edge34 \edge45 \edge51
\edge97 \edge78 \edge89
\nccircle[angleA=150]{<-}{2}{4pt}
\ncarc[arcangle=20]{->}{10}{6}
\ncarc[arcangle=20]{->}{6}{10}
\end{pspicture}  
\caption{A representation of the permutation $(1,3,4,5)(2)(6,10)(7,8,9)$ inside
  a disk.}
  \label{fig:disk}
\end{figure}
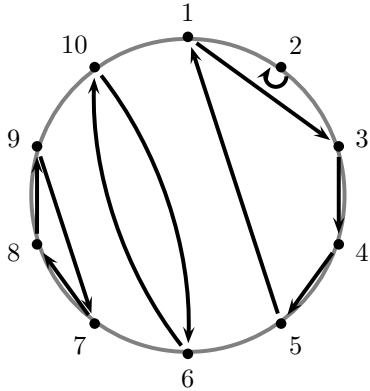

It is well known that the number of noncrossing permutations of $[n]$ is the
\emph{Catalan number}
\[
\mathrm{Cat}(n) = \frac{1}{n+1} \binom{2n}{n},
\]
and the number of noncrossing permutations of $[n]$ with $k$ cycles is the
\emph{Narayana number}
\[
\mathrm{Nara}(n,k) = \frac{1}{n} \binom{n}{k-1} \binom{n}{k}.
\]

We denote by $\Par(n,k)$ the set of integer partitions of $n$ with $k$ parts.
If $\lambda$ has $m_i$ parts of size $i$ for $i=1,2,\dots$, then we also write
as $\lambda=(1^{m_1}, 2^{m_2},\dots)$. We will use the following notations: if
$\lambda=(1^{m_1}, 2^{m_2},\dots)$ has $k$ parts, i.e. $m_1+m_2+\cdots=k$, we
define
\[
\binom{k}{\lambda} = \frac{k!}{m_1!m_2!\cdots}, \qquad
\qbinom{k}{\lambda} = \frac{[k]_q!}{[m_1]_q! [m_2]_q!\cdots},
\]
where we use the standard notations
\[
[n]_q = \frac{1-q^n}{1-q}, \qquad [n]_q! = [1]_q [2]_q \cdots [n]_q,
\qquad \qbinom{n}{k} = \frac{[n]_q!}{[k]_q! [n-k]_q!}.
\]

The \emph{cycle type} of a permutation $\pi$ is the partition $\lambda=(1^{m_1},
2^{m_2},\dots)$, where $m_i$ is the number of cycles with $i$ elements.
Kreweras \cite{Kreweras} showed that the number of noncrossing permutations of
$[n]$ with cycle type $\lambda=(1^{m_1}, 2^{m_2},\dots)\in \Par(n,k)$ is equal
to the \emph{Kreweras number}
\[
\mathrm{Kre}(\lambda): = \frac{1}{k} \binom{n}{k-1} \binom{k}{\lambda}.
\]

Bessis and Reiner \cite[Theorem 6.2]{Bessis2011} showed that if $X$ is the set
of noncrossing partitions of $[n]$ with type $\lambda=(1^{m_1},
2^{m_2},\dots)\in \Par(n,k)$,
\[
X(q) = \frac{1}{[k]_q} \qbinom{n}{k-1} \qbinom{k}{\lambda},
\]
and $C$ is the cyclic group of rotations acting on $X$, then $(X,X(q),C)$
exhibits the cyclic sieving phenomenon, see Section~2 for the definition.

Reiner and Sommers \cite{ReinerSommers} defined the \emph{$q$-Kreweras number}
for $\lambda=(1^{m_1}, 2^{m_2},\dots)\in \Par(n,k)$ by
\[
\mathrm{Kre}_q(\lambda) = \frac{q^{(n+1)(n-k)-\tau(\lambda)}}{[k]_q} \qbinom{n}{k-1}
  \qbinom{k}{\lambda},
\]
where $\tau(\lambda)=\sum_{i\ge1}{\lambda_i' \lambda_{i+1}'}$, and the
\emph{$q$-Narayana number} by
\[
\mathrm{Nara}_q(n,k) = \frac{q^{(n-k)(n+1-k)}} {[n]_q} \qbinom{n}{k-1}
  \qbinom{n}{k}. 
\]
Reiner and Sommers \cite{ReinerSommers} showed that
\[
\sum_{\lambda\in \Par(n,k)} \mathrm{Kre}_q(\lambda) = \mathrm{Nara}_q(n,k)
\]
and
\[
\sum_{k=0}^n \mathrm{Nara}_q(n,k) = \mathrm{Cat}_q(n),
\]
where $\mathrm{Cat}_q(n)$ is the \emph{$q$-Catalan number} defined by 
\[
\mathrm{Cat}_q(n) = \frac{1}{[n+1]_q} \qbinom{2n}{n}.
\]

In this paper we prove analogous results for annular noncrossing permutations.
Annular noncrossing permutations are an annulus-analog of noncrossing
permutations. They were first introduced by Mingo and Nica \cite{Mingo2004} and
studied further in \cite{GNO, Nica2009}. Recently Kim, Seo, and Shin
\cite{KSS_ANP} used annular noncrossing permutations to give a combinatorial
proof of Goulden and Jackson's formula \cite{Goulden1997} for the number of
minimal transitive factorizations of a product of two cycles.

This paper is organized as follows.  In Section~\ref{sec:enum-annul-noncr} we
define annular noncrossing permutations and give an instance of the cyclic
sieving phenomenon on them.  In Section~\ref{sec:annular-q-kreweras} we define
annular $q$-Kreweras numbers, three types of annular $q$-Narayana numbers, and
annular $q$-Catalan numbers and show that a sum of annular $q$-Kreweras numbers
becomes an annular $q$-Narayana number, and a sum of annular $q$-Narayana
numbers becomes an annular $q$-Catalan numbers. We then show that these
polynomials give cyclic sieving phenomena. In
Section~\ref{sec:annul-noncr-match} we enumerate annular noncrossing matchings.

\section{Enumeration of annular noncrossing permutations}
\label{sec:enum-annul-noncr}

Let $n$ and $m$ be positive integers.  An \emph{$(n,m)$-annulus} is an annulus
in which $1,2,\dots,n$ are arranged in clockwise order on the exterior circle
and $n+1,n+2,\dots,n+m$ are arranged in counter-clockwise order on the interior
circle. 

Let $(a_1,\dots,a_k)$ be a cycle whose elements are contained in $[n+m]$.  We
will represent this cycle inside an $(n,m)$-annulus by drawing an arrow from
$a_i$ to $a_{i+1}$ for each $i=1,2,\dots k$, where $a_{k+1}=a_1$.  An
\emph{interior cycle} (respectively~\emph{exterior cycle}) is a cycle all of
whose elements are on the interior (respectively~exterior) circle. A
\emph{connected cycle} is a cycle which contains both an element in the interior
circle and an element in the exterior cycle.  Suppose
$\{e_1<e_2<\dots<e_u\}=\{a_1,\dots,a_k\} \cap [n]$ and
$\{i_1<i_2<\dots<i_v\}=\{a_1,\dots,a_k\} \cap \{n+1,n+2,\dots,n+m\}$.  Then we
say that the cycle $(a_1,\dots,a_k)$ is \emph{oriented clockwise} if we can
express $(a_1,\dots,a_k)=(e'_1,\dots,e'_u,i'_1,\dots,i'_v)$ for some integers
$e'_1,\dots,e'_u,i'_1,\dots,i'_v$ with $(e'_1,\dots,e'_u)=(e_1,\dots,e_u)$ and
$(i'_1,\dots,i'_v)=(i_1,\dots,i_v)$.  In this case we say that the cycle
$(a_1,\dots,a_k)$ is of \emph{size} $k$, of \emph{exterior size} $u$ and of
\emph{interior size} $v$.

\begin{figure}
  \centering
\begin{pspicture}(-6,-6)(6,6) 
\psset{linewidth=1.5pt, nodesep =2pt}
\pscircle[linecolor=gray](0,0){5}
\pscircle[linecolor=gray](0,0){2}
\multido{\n=90+-40,\i=1+1}{9}{\evput{5}{\n}{\i}{\i}}
\multido{\n=0+60,\i=10+1}{6}{\ivput{2}{\n}{\i}{\i}}
\edge 12 \edge 23 
\edge6{15} \ncarc[arcangle=-50]{->}{15}{10}
\edge6{15} \ncarc[arcangle=-50]{->}{10}{11}
\edge6{15} \ncarc[arcangle=-50]{->}{13}{14}
\edge{11}1
\ncarc[arcangle=10]{->}{3}{6}
\nccircle[angleA=40]{<-}{12}{4pt}
\ncarc[arcangle=0]{->}{4}{5}
\ncarc[arcangle=50]{->}{5}{4}
\edge{14}7 \edge78 \edge89 \edge9{13}
\end{pspicture}  
\caption{A representation of the annular noncrossing permutation
  $(1,2,3,6,15,10,11)(4,5)(7,8,9,13,14)(12)$.}
  \label{fig:ex}
\end{figure}
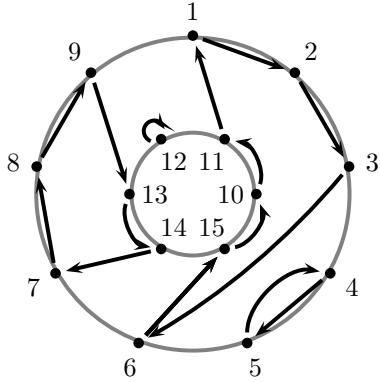

A permutation of $[n+m]$ is called an \emph{$(n,m)$-annular noncrossing
  permutation} if we can draw its cycles inside an $(n,m)$-annulus in such a way
that every cycle is oriented clockwise and there are no crossing arrows, see
Figure~\ref{fig:ex}. 

\begin{remark}
  Unlike noncrossing permutations, the map changing each cycle to a block is not
  a one-to-one correspondence from annular noncrossing permutations to annular
  noncrossing partitions. However, as is shown in
  \cite[Proposition~4.4]{Mingo2004}, if there are at least two connected cycles,
  then this map becomes a bijection. Thus every result in this paper on annular
  noncrossing permutations with at least two connected cycles works for annular
  noncrossing partitions.
\end{remark}

If an $(n,m)$-annular noncrossing permutation has a connected cycle, it is
called \emph{connected}. Since a disconnected annular noncrossing permutation is
essentially a disjoint union of two noncrossing permutations, in this paper we
will only consider connected annular noncrossing permutations.

We denote by $\anc(n,m)$ the set of connected $(n,m)$-annular noncrossing
permutations.  For $\pi\in\anc(n,m)$, the \emph{exterior cycle type}
(respectively~\emph{interior cycle type}) of $\pi$ is the partition
$(1^{m_1},2^{m_2},\dots)$ where $m_i$ is the number of exterior cycles
(respectively~interior cycles) of size $i$.  The \emph{connected exterior cycle type}
(respectively~\emph{connected interior cycle type}) of $\pi$ is the partition
$(1^{m_1},2^{m_2},\dots)$ where $m_i$ is the number of connected cycles of
exterior size (respectively~interior size) $i$.

For integers $n,m,c,r,s,R,S\ge0$ and $\alpha\in \Par(R,r)$,
$\beta\in \Par(S,s)$, $\lambda\in \Par(n-R,c)$, and $\mu\in \Par(m-S,c)$, we
define the following:
\begin{itemize}
\item $\anc(n,m;c)$ is the set of $\pi\in\anc(n,m)$ with $c$ connected cycles. 
\item $\anc(n,m;c,r,s)$ is the set of $\pi\in\anc(n,m;c)$ with $r$ exterior
  cycles and $s$ interior cycles.
\item $\anc(n,m;c,r,s,R,S)$ is the set of $\pi\in\anc(n,m;c,r,s)$ such that the
  total size of exterior cycles is $R$ and the total size of interior cycles is
  $S$.
\item $\anc(n,m;c,r,s,R,S;\alpha,\beta,\lambda,\mu)$ is the set of
  $\pi\in\anc(n,m)$ with exterior cycle type $\alpha\in \Par(R,r)$, interior
  cycle type $\beta\in \Par(S,s)$, connected exterior cycle type
  $\lambda\in \Par(n-R,c)$, and connected interior cycle type
  $\mu\in \Par(m-S,c)$.
\end{itemize}

\begin{defn} 
  Suppose a cyclic group $C$ of order $n$ acts on a finite set $X$.  Let $X(q)$
  be a polynomial in $q$.  We say that $(X,X(q),C)$ exhibits the \emph{cyclic
    sieving phenomenon} (CSP) if $X(w(c)) = |\{x\in X: c(x)=x\}|$ for all $c\in
  C$. Here, $w:C\to \mathbb C^\times$ is an embedding of $C$ into the
  multiplicative group $\mathbb C^\times$ of nonzero complex numbers sending a
  cyclic generator of $C$ to a primitive $n$th root of unity.
\end{defn}

The CSP was first introduced by Reiner, Stanton, and White \cite{CSP}.  Recently
many instances of the CSP have been found.  In \cite{SaganCSP}, Sagan gives a
nice survey on the CSP. 
 
Usually the CSP is defined with a polynomial $X(q)$ whose coefficients are
integers.  In this paper we do not assume any conditions on the coefficients of
$X(q)$. In fact we will see an instance of the CSP with $X(q)$ whose
coefficients are not integers. However, by the definition of the CSP, $X(w(c))$
is always a nonnegative integer.

The goal of this section is to find an instance of the CSP for $\anc(n,m)$. To
this end let $C_1\times C_2$ be the product of two cyclic groups $C_1$ acting on
the exterior circle and $C_2$ acting on the interior circle.  Then $C_1\times
C_2$ gives a bicyclic action on $\anc(n,m)$ by $(c_1,c_2)\pi =
c_1(c_2(\pi))=c_2(c_1(\pi))$. One may wonder if this bicyclic action gives a
``bicyclic sieving phenomenon'' as in \cite{BRS}. However, this is not the case
by the next proposition.

\begin{prop}
  Let $(c_1,c_2)\in C_1\times C_2$. Then $(c_1,c_2)$ has no fixed points in
  $\anc(n,m)$ unless $c_1$ and $c_2$ have the same order. 
\end{prop}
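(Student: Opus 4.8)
The plan is to translate the statement into a commutation condition and then extract the equality of orders from the orbit of a single connected cycle. Write $\sigma$ for the permutation of $[n+m]$ that realizes the rotation $g=(c_1,c_2)$, rotating the exterior labels according to $c_1$ and the interior labels according to $c_2$; then $g\cdot\pi=\sigma\pi\sigma^{-1}$, so $\pi$ is fixed by $g$ if and only if $\sigma$ commutes with $\pi$, equivalently $\sigma$ permutes the cycles of $\pi$ among themselves. Set $d_1=\operatorname{ord}(c_1)$ and $d_2=\operatorname{ord}(c_2)$; the goal is to show $d_1=d_2$. Note that $\sigma$ preserves the partition of $[n+m]$ into exterior and interior labels, and that a nonidentity rotation of either circle has no fixed point, so $\sigma^s$ fixes an exterior (resp.\ interior) label only when $d_1\mid s$ (resp.\ $d_2\mid s$).

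Since $\pi$ is connected, it has a connected cycle $\gamma$; because $\sigma$ preserves exterior/interior membership it sends connected cycles to connected cycles, so the entire $\langle g\rangle$-orbit of $\gamma$ consists of connected cycles. Let $t$ be the size of this orbit, so $g^t\gamma=\gamma$ (as a set, where $g^s\gamma$ denotes $\sigma^s(\gamma)$, a cycle of $\pi$). For the easy divisibilities, pick an exterior label $e\in\gamma$; then $\sigma^{d_1}(e)=e$, and since $\sigma^{d_1}(e)$ lies in the cycle $g^{d_1}\gamma$ while $e$ lies in $\gamma$, these two cycles share a point and hence coincide. Thus $g^{d_1}\gamma=\gamma$, giving $t\mid d_1$; applying the same argument to an interior label of $\gamma$ gives $t\mid d_2$.

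The heart of the argument is the reverse divisibility. Write $\gamma=(a_1,\dots,a_k)$ in its clockwise canonical form, where $a_1,\dots,a_u$ are exterior and $a_{u+1},\dots,a_k$ (with $k=u+v$) are interior; connectedness means $u\ge 1$ and $v\ge 1$. Since $\sigma^t$ commutes with $\pi$ and fixes $\gamma$ setwise, its restriction to the support of $\gamma$ commutes with the $k$-cycle $\gamma$ and therefore lies in the cyclic group generated by $\gamma$, i.e.\ $\sigma^t(a_j)=a_{j+k_0}$ for a fixed shift $k_0$. Because $\sigma$ preserves exterior/interior type, the cyclic word of types along $\gamma$—which is $E^uI^v$—must be invariant under a shift by $k_0$. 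With $u,v\ge 1$ this word consists of a single block of $E$'s followed by a single block of $I$'s, so it has full period $u+v$; hence $k_0\equiv 0$ and $\sigma^t$ fixes every element of $\gamma$. In particular $\sigma^t$ fixes an exterior and an interior label, so $d_1\mid t$ and $d_2\mid t$. Combining with the previous paragraph yields $t=d_1$ and $t=d_2$, whence $d_1=d_2$, as claimed. I expect the step identifying the induced motion on $\gamma$ as a rotation and ruling out a nontrivial shift via the period of $E^uI^v$ to be the only delicate point; everything else is bookkeeping with orbits and divisibility, and it is precisely here that connectedness ($u,v\ge 1$) is indispensable.
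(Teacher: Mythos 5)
Your proof is correct, and it shares the paper's overall skeleton: work with a single connected cycle $\gamma$ of the fixed point $\pi$, use disjointness of the cycles of $\pi$ (a shared point forces $g^s\gamma=\gamma$), and extract divisibility relations between $d_1$ and $d_2$. The key step, however, is executed differently. The paper never introduces the orbit size $t$; it applies the specific power $g^{d_1}$, which fixes every exterior label pointwise, so $g^{d_1}\gamma$ is a cycle of $\pi$ meeting $\gamma$ (hence equal to it) whose exterior elements are anchored pointwise. Reading off the successor of $a_u$ in the two presentations of this one cycle gives $c_2^{d_1}(b_1)=b_1$ at once, whence one divisibility between $d_1$ and $d_2$, and the symmetric argument with $g^{d_2}$ gives the other. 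With this choice of exponent the rotational ambiguity you labor to exclude never arises, so the paper needs neither the fact that the centralizer of a $k$-cycle is the cyclic group it generates nor the aperiodicity of the cyclic type word $E^uI^v$. What your longer route buys is a stronger intermediate statement: no nonidentity power of an annular rotation stabilizes a connected cycle even setwise, i.e.\ connected cycles have trivial cyclic symmetry. That is precisely the connected-cycle case of the assertion, invoked without proof in the sketch of Theorem~\ref{thm:rot_inv}, that a rotation-invariant permutation has no rotation-invariant cycles, so your argument carries some independent value. One small point in your favor: the paper's parenthetical note that $c_1^{(t)}(i)=i$ implies ``$t$ divides $d_1$'' states the divisibility backwards (it should read $d_1\mid t$, exactly as in your write-up); this is harmless there, since after symmetrizing one still gets $d_1=d_2$, but your formulation of the fact is the correct one.
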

\begin{proof}
  Let $d_1$ and $d_2$ be the orders of $c_1$ and $c_2$ respectively. Suppose
  $\pi\in \anc(n,m)$ is a fixed point, i.e. $(c_1,c_2)\pi = \pi$.  Let
  $k_1=n/d_1$ and $k_2=m/d_2$.  We can assume that $c_1$ is the map sending
  $i\in [n]$ to $j\in [n]$ with $j\equiv i+k_1 \bmod n$, and $c_2$ is the map
  sending $n+i\in \{n+1,\dots,n+m\}$ to $n+j\in \{n+1,\dots,n+m\}$ with $j\equiv
  i-k_2 \bmod m$. Note that for each $i\in[n]$ (respectively~$i\in\{n+1,\dots,n+m\}$)
  $c_1^{(t)}(i) = i$ (respectively~$c_2^{(t)}(i) = i$) implies that $t$ divides $d_1$
  (respectively~$d_2$).  Consider a connected cycle
  $\gamma=(a_1,\dots,a_u,b_1,\dots,b_v)$ of $\pi$ where $a_1,\dots,a_u\in [n]$
  and $b_1,\dots,b_v\in \{n+1,\dots,n+m\}$. Note that $u>0$ and $v>0$.  Since
  $(c_1,c_2)^{(d_1)}(\pi) = \pi$ and
  $(c_1,c_2)^{(d_1)}(\gamma)=(a_1,\dots,a_u,c_2^{(d_1)}(b_1) ,\dots,
  c_2^{(d_1)}(b_v))$, we have $(c_1,c_2)^{(d_1)}(\gamma) = \gamma$. In
  particular, $c_2^{(d_1)}(b_1) = b_1$, which implies that $d_1$ divides
  $d_2$. Similarly we get that $d_2$ divides $d_1$. Thus $d_1=d_2$.
\end{proof}

Thus we will only consider the elements $(c_1,c_2)$ for which $c_1$ and $c_2$
have the same order. We call such pair $(c_1,c_2)$ an \emph{$(n,m)$-annular
  rotation}, or simply an \emph{annular rotation}.  Note that if $(c_1,c_2)$ is
an $(n,m)$-annular rotation, then the order $d$ of this action divides both $n$
and $m$.

For a partition $\lambda=(1^{m_1},2^{m_2},\dots)$ we denote by $\mathfrak
S(\lambda)$ the set of rearrangements of the following sequence
\[
\overbrace{1,\dots,1}^{m_1},\overbrace{2,\dots,2}^{m_2},\dots.
\]
 In other words,
each element in $\mathfrak S(\lambda)$ is a sequence $(a_1,a_2,\dots)$ where
each integer $i$ appears exactly $m_i$ times.  If each $m_i$ is divisible by
$d$, we define $\lambda/d=(1^{m_1/d}, 2^{m_2/d},\dots)$. In this case we say
that $\lambda$ is \emph{divisible} by $d$.

\begin{thm}\label{thm:rot_inv}
  The number of $\pi\in \anc(n,m;c,r,s,R,S;\alpha,\beta,\lambda,\mu)$ invariant
  under an annular rotation of order $d$ is equal to
\[
d\cdot \frac{(\widehat{n}-\widehat{R})(\widehat{m}-\widehat{S})}{\widehat{c}}
\binom{\widehat{n}}{\widehat{r}}\binom{\widehat{m}}{\widehat{s}}
\binom{\widehat{r}}{\widehat{\alpha}} \binom{\widehat{s}}{\widehat{\beta}}
\binom{\widehat{c}}{\widehat{\lambda}} \binom{\widehat{c}}{\widehat{\mu}},
\]
if all of $n,m,c,r,s,R,S,\alpha,\beta,\lambda,\mu$ are divisible by $d$, and $0$
otherwise.  Here $\widehat Z$ means $Z/d$.
\end{thm}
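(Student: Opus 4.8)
The plan is to reduce the fixed-point count for a rotation $\rho$ of order $d$ to a counting problem on the smaller $(\widehat n,\widehat m)$-annulus, and then to solve that problem by decomposing a connected annular noncrossing permutation into its connected ``core'' together with the exterior and interior noncrossing permutations hanging off it. First I would record the necessity of the divisibility hypotheses. Since $\rho$ is an annular rotation, conjugation by $\rho$ permutes the cycles of any invariant $\pi$ while preserving the trichotomy into exterior, interior, and connected cycles and preserving all sizes; hence the three cycle-type data and all of $c,r,s,R,S$ and the partitions $\alpha,\beta,\lambda,\mu$ must be $\rho$-stable. The real content of this step is the structural claim that $\rho$ acts \emph{freely} on the set of cycles of a connected $\pi$, i.e.\ no cycle is fixed setwise by a nontrivial power of $\rho$. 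Freeness forces every orbit of cycles to have size exactly $d$, which in turn forces each of $c,r,s,R,S$ and each multiplicity of $\alpha,\beta,\lambda,\mu$ to be divisible by $d$; when divisibility fails there is no invariant $\pi$ and the count is $0$.

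Proving freeness is where I expect the main obstacle to lie, and I would argue it by cases. If an \emph{exterior} (or, symmetrically, \emph{interior}) cycle $\gamma$ were fixed by a nontrivial power of $\rho$, its point set would be a union of orbits of the rotation of the outer circle and would therefore meet every one of the rotated sectors; being a clockwise, noncrossing cycle with points spread around the whole circle, $\gamma$ then traces an essential loop in the annulus, separating the inner boundary from the outer one. Such a loop blocks every connected cycle from joining an exterior point to an interior point without producing a crossing, contradicting connectedness of $\pi$ (the simplest instance is a fixed two-cycle $(1,3)$ across a diameter, which already walls off the points it encircles). For a \emph{connected} cycle the obstruction is different: the clockwise-orientation convention forces a connected cycle to list all its exterior points in cyclic order and then all its interior points in cyclic order, so it has winding number one around the annulus, and a winding-number-one curve cannot be invariant under a nontrivial rotation. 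Making these three arguments precise is the heart of the proof.

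Granting freeness, the cycles of an invariant $\pi$ fall into orbits of size $d$, and I would set up a bijection between the invariant permutations and the connected annular noncrossing permutations on the $(\widehat n,\widehat m)$-annulus carrying a $\mathbb Z/d$-torsor of gluing data; this torsor of lifts is the source of the leading factor $d$, as it records the rotational phase synchronising the exterior and interior windings of the connected core. It then remains to count $\anc(\widehat n,\widehat m;\widehat c,\widehat r,\widehat s,\widehat R,\widehat S;\widehat\alpha,\widehat\beta,\widehat\lambda,\widehat\mu)$, i.e.\ to establish the $d=1$ formula. I would decompose a connected permutation into its connected core, a cyclic arrangement of the $\widehat c$ connected cycles winding once around the annulus, and the exterior and interior noncrossing permutations inserted into the gaps the core cuts out. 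A cycle-lemma argument counts the core: there are $\widehat n-\widehat R$ exterior anchor positions and $\widehat m-\widehat S$ interior anchor positions, assigning the prescribed exterior and interior sizes to the $\widehat c$ cycles contributes $\binom{\widehat c}{\widehat\lambda}\binom{\widehat c}{\widehat\mu}$, and dividing by the $\widehat c$ cyclic rotations of the necklace yields $\frac{(\widehat n-\widehat R)(\widehat m-\widehat S)}{\widehat c}$. The exterior cycles then form a noncrossing permutation on the line of exterior points broken by the anchors, contributing $\binom{\widehat n}{\widehat r}\binom{\widehat r}{\widehat\alpha}$, and symmetrically the interior cycles contribute $\binom{\widehat m}{\widehat s}\binom{\widehat s}{\widehat\beta}$; multiplying everything gives the asserted formula.

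Besides freeness, the delicate bookkeeping is to show that the exterior-cycle and interior-cycle placements are genuinely independent and are enumerated by exactly those binomials, and to confirm that the cycle-lemma normalisation and the $\mathbb Z/d$ gluing together produce a single factor $d$ rather than some other power. For the placement count I would invoke the standard fact that noncrossing permutations relative to a fixed linear backbone are enumerated by Kreweras-type products, which collapse to the stated binomials once the connected core destroys the cyclic symmetry of each circle; where convenient I would pass through the Mingo--Nica bijection to annular noncrossing partitions, which the Remark makes available whenever there are at least two connected cycles.
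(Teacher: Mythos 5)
Your overall architecture (freeness of the rotation action on cycles, hence the divisibility constraints; a reduction to the quotient $(\widehat n,\widehat m)$-annulus via a $\Z/d$-torsor of lifts; a cycle-lemma count for $d=1$) is coherent and genuinely different from the paper's proof, which instead builds one bijection, valid for all $d$ simultaneously, between pairs $(\gamma,\pi)$ ($\pi$ invariant, $\gamma$ a marked connected cycle of $\pi$) and explicit tuples of positions, subsets and multiset-sequences, and then divides by $c$. However, two load-bearing steps of your plan fail or are missing as stated. First, your freeness argument for connected cycles rests on the claim that ``a winding-number-one curve cannot be invariant under a nontrivial rotation.'' That is false: the core circle of the annulus has winding number one and is invariant under every rotation. (The premise is also shaky: noncrossing drawings are not unique up to isotopy, and a single connected cycle such as $(1,n+1)$ admits drawings of winding number $0$ as well as $\pm 1$.) The correct argument is purely combinatorial, and is the one the paper uses for its proposition on the bicyclic action: if $\rho^t\neq\mathrm{id}$ fixes a cycle $\gamma$ setwise, then since $\rho^t$ sends exterior points to exterior points and interior to interior, while the clockwise convention forces the exterior elements of $\gamma$ to occupy a contiguous block of positions in the cyclic word, $\rho^t$ must fix $\gamma$ pointwise; but a nontrivial annular rotation has no fixed points in $[n+m]$. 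Your exterior-cycle case is similarly overstated: an invariant exterior cycle need not ``trace an essential loop'' (it can bound a winding-zero sliver); the actual contradiction is that the component of the complement of its curve containing the inner circle cannot contain a full rotation-orbit of exterior points of connected cycles, which needs a Jordan-curve/interleaving argument rather than essentiality.

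Second, the reduction step --- that invariant permutations biject with $\anc(\widehat n,\widehat m;\widehat c,\widehat r,\widehat s,\widehat R,\widehat S;\widehat\alpha,\widehat\beta,\widehat\lambda,\widehat\mu)\times(\Z/d)$ --- is asserted, not proved, and it is not a routine covering-space fact. Lifting a noncrossing drawing of the quotient permutation along the $d$-fold cover can produce invalid objects: for $d=2$ and $(n,m)=(2,2)$, the winding-one drawing of the quotient cycle joining the exterior point to the interior point lifts to the single $4$-cycle $(1,3,2,4)$ (with $3,4$ interior), which is embedded but whose exterior elements $1,2$ are not contiguous, so it is not oriented clockwise and lies outside $\anc(2,2)$. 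Thus the fiber count of $d$ must be established at the combinatorial level, using the interleaving structure of the connected cycles' exterior and interior blocks; that is essentially the same difficulty as the theorem itself, and your proposal offers no argument for it. Your $d=1$ core-and-gaps decomposition is in the same spirit as the paper's parenthesization bijection and looks repairable, but as written the proposal does not constitute a proof: the one step you call the heart of the argument is justified by a false topological claim, and the torsor claim that produces the factor $d$ is exactly what needs proving.
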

\begin{proof}[Sketch of Proof]
  Since the proof is similar to those in \cite[Proposition~4.2]{GNO} and in
  \cite[Proposition~4.1]{Kim}, we will only give a sketch. 

  If $d=1$, this follows from Theorem~\ref{thm:rot_inv}. Suppose $d>1$.  It is
  not difficult to see that if an annular noncrossing permutation $\pi$ is
  invariant under an annular rotation of order $d$, then $\pi$ has no cycles
  invariant under this rotation. Thus if any of
  $n,m,c,r,s,R,S,\alpha,\beta,\lambda,\mu$ is not divisible by $d$, then there
  is no such $\pi$.  So we will assume that all of
  $n,m,c,r,s,R,S,\alpha,\beta,\lambda,\mu$ are divisible by $d$.

  Let $(c_1,c_2)$ be an annular rotation of order $d$.  We will find a bijection
  between the set
\[
\A=\{(\gamma,\pi): \pi\in \anc(n,m;c,r,s,R,S;\alpha,\beta,\lambda,\mu), 
(c_1,c_2)\pi = \pi, \mbox{$\gamma$ is a connected cycle of $\pi$}\}
\]
and the set
\[
\B = \left\{(a,b,R^E,R^I,V^E,V^I,V^{CE},V^{CI}) \left| 
\begin{array}{l}
a\in[n - R], 
b\in[m - S], \\
R^E\subset[\widehat n],
R^I\subset[\widehat m],
|R^E|=\widehat r, |R^I|=\widehat s,
\\
V^E\in \mathfrak S(\widehat \alpha), 
V^I\in \mathfrak S(\widehat \beta), 
V^{CE}\in \mathfrak S(\widehat \lambda), 
V^{CI}\in \mathfrak S(\widehat \mu)
\end{array}
\right.
\right\}.
\]

Suppose $(\gamma,\pi)\in \A$. Let $\gamma=(a_1,\dots,a_u,b_1,\dots,b_v)$ with
$a_1,\dots,a_u\in [n]$ and $b_1,\dots,b_v\in \{n+1,\dots,n+m\}$.  We define
$a=a_1$ and $b=b_v$. Arrange the integers in $[n+m]$ as follows:
\begin{equation}
  \label{eq:1}
a, a+1, \dots, n, 1, 2, \dots, a-1, b+1, b+2, \dots, n+m, n+1, n+2,\dots, b.  
\end{equation}
For each exterior or interior cycle of size $t$, we place a right parenthesis
$)_t$ labeled by $t$ after the rightmost integer in the sequence \eqref{eq:1}
which is an element of the cycle. We define $R^E$ (respectively~$R^I$) to be the set of
integers $i$ with a right parenthesis such that $i\in [\widehat n]$
(respectively~$i-n\in [\widehat m]$). Then we have $|R^E|=\widehat r$ and
$|R^I|=\widehat s$. Let $i_1<i_2<\dots<i_{\widehat r}$ be the elements of $R^E$.
We define $V^E$ to be the sequence $(\ell_1,\ell_2,\dots,\ell_{\widehat r})$
where $\ell_j$ is the label of the right parenthesis after $i_j$.  The sequence
$V^I$ is defined similarly.

Now remove the integers contained in an exterior or an interior cycle from the
sequence \eqref{eq:1}.  For each connected cycle, we place a left
(respectively~right) parenthesis before (respectively~after) the leftmost
(respectively~rightmost) integer in the remaining sequence which is an element
of the cycle. Then the $c$ left parentheses divide the first part of the
remaining sequence consisting of integers at most $n$ into $c$ subsequences.
Let $V^{CE}$ be the sequence of sizes of the $c$ subsequences.  Similarly, the
$c$ right parentheses divide the second part of the remaining sequence
consisting of integers greater than $n$ into $c$ subsequences. We define
$V^{CI}$ to be the sequence of sizes of the $c$ subsequences.  We have just
constructed the map $(\gamma,\pi)\mapsto(a,b,R^E,R^I,V^E,V^I,V^{CE},V^{CI})$.
Using the ideas of \cite[Proposition~4.2]{GNO} and \cite[Proposition~4.1]{Kim} ,
one can show that this gives a bijection from $\A$ to $\B$.

Thus the number of $\pi\in \anc(n,m;c,r,s,R,S;\alpha,\beta,\lambda,\mu)$
invariant under an annular rotation of order $d$ is $|\A|/c =|\B|/c$, which is
easily seen to equal to the number in the theorem.
\end{proof}

Now we state the main theorem of this section. 

\begin{thm}\label{thm:CSP}
  Suppose that the cyclic group $C$ of $(n,m)$-annular rotations acts on the set
 \[
X=\anc(n,m;c,r,s,R,S;\alpha,\beta,\lambda,\mu).
\] Let
\[
X(q) = \frac{[(n-R)(m-S)]_q}{[c]_q} \qbinom{n}{r}\qbinom{m}{s}
\qbinom{r}{\alpha} \qbinom{s}{\beta} \qbinom{c}{\lambda} \qbinom{c}{\mu}.
\]
Then $(X, X(q), C)$ exhibits the CSP.
\end{thm}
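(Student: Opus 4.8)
The plan is to establish the CSP in the standard way: evaluate $X(q)$ at roots of unity and match the result against the fixed-point counts of Theorem~\ref{thm:rot_inv}. First I would record that the group $C$ of $(n,m)$-annular rotations is cyclic of order $N:=\gcd(n,m)$ (the largest possible order of an annular rotation). Fixing a generator $g$ with $w(g)=\zeta$ a primitive $N$th root of unity, the CSP is the family of identities $X(\zeta^j)=|\{\pi\in X:g^j(\pi)=\pi\}|$ for $0\le j<N$. Writing $d$ for the order of $g^j$, the value $\zeta^j$ is a primitive $d$th root of unity and $d\divi N$, so in particular $d\divi n$ and $d\divi m$; moreover by Theorem~\ref{thm:rot_inv} the right-hand side depends only on $d$. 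Hence it suffices to fix, for each $d\divi\gcd(n,m)$, an arbitrary primitive $d$th root of unity $\omega$ and to show that $X(\omega)$ equals the number in Theorem~\ref{thm:rot_inv}.

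For the generic case, in which all of $n,m,c,r,s,R,S,\alpha,\beta,\lambda,\mu$ are divisible by $d$, the main tool is the $q$-Lucas theorem (see \cite{SaganCSP}): each of the six $q$-(multi)nomial factors has all of its arguments divisible by $d$, so each specializes at $q=\omega$ to the ordinary (multi)nomial coefficient with every argument divided by $d$, for instance $\qbinom{n}{r}\to\binom{\widehat n}{\widehat r}$ and $\qbinom{r}{\alpha}\to\binom{\widehat r}{\widehat\alpha}$, where $\widehat Z=Z/d$. Writing $P=(n-R)(m-S)$, the prefactor $\qint{P}/\qint{c}$ is of indeterminate form $0/0$ at $q=\omega$ when $d>1$, since $d$ divides both $P$ and $c$; a single application of l'H\^opital's rule gives its limit as $P/c$ (for $d=1$ this is just the evaluation at $q=1$, yielding $|X|$). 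Multiplying the seven limits and invoking the elementary identity $d\cdot\frac{(\widehat n-\widehat R)(\widehat m-\widehat S)}{\widehat c}=\frac{(n-R)(m-S)}{c}$ then reproduces exactly the count in Theorem~\ref{thm:rot_inv}.

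For the degenerate case, in which at least one of $\alpha,\beta,\lambda,\mu$ is not divisible by $d$, Theorem~\ref{thm:rot_inv} predicts no fixed points, so I must show $X(\omega)=0$. I would do this by bounding below the order of vanishing $\mathrm{ord}_\omega X$ of $X$ at $q=\omega$ (its multiplicity as a root of the cyclotomic polynomial $\Phi_d$), using $\mathrm{ord}_\omega\qint{k}=1$ iff $d\divi k$ and $\mathrm{ord}_\omega\qint{k}!=\lfloor k/d\rfloor$ for $d>1$. Two observations drive the estimate. First, because $d\divi n$, one has $\mathrm{ord}_\omega\bigl(\qbinom{n}{r}\qbinom{r}{\alpha}\bigr)\ge1$ whenever $\alpha$ is not divisible by $d$: if $d\nmid r$ the first factor vanishes, while if $d\divi r$ then the multiplicities of $\alpha$ cannot all be divisible by $d$, forcing a base-$d$ carry and hence a zero of the second factor; symmetrically with $(\beta,s,m)$. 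Second, when $d\divi c$ a non-divisible $\lambda$ (resp.\ $\mu$) again forces a carry, so $\mathrm{ord}_\omega\qbinom{c}{\lambda}\ge1$ (resp.\ $\mathrm{ord}_\omega\qbinom{c}{\mu}\ge1$); and if $\alpha$ or $\beta$ is divisible by $d$ then $d\divi R$ or $d\divi S$, whence $d\divi P$ and $\qint{P}$ supplies a further zero.

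The only negative contribution to $\mathrm{ord}_\omega X$ is the simple pole of $1/\qint{c}$, present exactly when $d\divi c$, and ensuring that this pole is always absorbed by the zeros above is the step I expect to be the main obstacle. Combining the two observations, in the case $d\divi c$ one gets the bound
\[
\mathrm{ord}_\omega X\ \ge\ \mathbf 1[d\divi P]-1+\mathbf 1[\alpha\not\equiv0]+\mathbf 1[\beta\not\equiv0]+\mathbf 1[\lambda\not\equiv0]+\mathbf 1[\mu\not\equiv0],
\]
where $\mathbf 1[\cdot\not\equiv0]$ flags non-divisibility by $d$; a short case analysis on which partition fails—using that $\mathbf 1[d\divi P]=1$ as soon as $\alpha$ or $\beta$ is divisible—shows the right-hand side is at least $1$. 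When $d\nmid c$ there is no pole, and either a non-divisible $\alpha$ or $\beta$ already supplies a zero, or else $\alpha,\beta$ are both divisible and $\qint{P}$ vanishes. In every degenerate case $\mathrm{ord}_\omega X\ge1$, so $X(\omega)=0$, matching Theorem~\ref{thm:rot_inv} and completing the proof of the CSP.
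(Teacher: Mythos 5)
Your proposal is correct, and it shares the paper's overall skeleton: identify $C$ as cyclic of order $\gcd(n,m)$, reduce the CSP via Theorem~\ref{thm:rot_inv} to showing that $X(\omega)$ equals the fixed-point count for each $d\divi\gcd(n,m)$ and each primitive $d$th root of unity $\omega$, and handle the generic case (all parameters divisible by $d$) by root-of-unity evaluation of the $q$-binomial factors; your l'H\^opital computation of $\lim_{q\to\omega}\qint{(n-R)(m-S)}/\qint{c}=(n-R)(m-S)/c$ is in fact a rigorous reading of what the paper compresses into Lemma~\ref{lem:eval}'s loosely stated ``$[n]_{q=w}=n/d$''. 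Where you genuinely diverge is the vanishing case. The paper arranges never to confront a pole: it leans on its polynomiality machinery (Lemma~\ref{lem:polynomial} and the factorization \eqref{eq:11}, together with the mirror factorization in which the roles of $\lambda$ and $\mu$ are exchanged) to present $X(q)$ as a product of honest polynomials, and then runs a four-case analysis (failure of $r$ or $s$; of $\alpha$ or $\beta$; of $c$; of $\lambda$ or $\mu$) in which one polynomial factor vanishes at $\omega$. You instead do zero/pole bookkeeping on the rational expression itself, using the carry inequality $\sum_i\lfloor a_i/d\rfloor<\lfloor r/d\rfloor$ for a non-divisible multinomial and the observation that divisibility of $\alpha$ (resp.\ $\beta$) forces $d\divi (n-R)(m-S)$, and you verify that the single possible pole, from $1/\qint{c}$, is always outweighed by zeros; I checked your inequality $\mathbf{1}[d\divi P]-1+\mathbf{1}[\alpha\not\equiv0]+\mathbf{1}[\beta\not\equiv0]+\mathbf{1}[\lambda\not\equiv0]+\mathbf{1}[\mu\not\equiv0]\ge1$ with $P=(n-R)(m-S)$ in all subcases, and it holds. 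What each approach buys: yours is uniform and entirely independent of the paper's polynomiality proposition (you only need the rational function to have nonnegative order at $\omega$, which your estimate supplies), at the cost of valuation arithmetic; the paper's stays within plain polynomial evaluation but needs the auxiliary lemmas and two separate factorizations. One point you should make explicit rather than implicit: your generic/degenerate dichotomy is exhaustive only because divisibility by $d$ of all four partitions $\alpha,\beta,\lambda,\mu$ automatically forces divisibility of $r,R,s,S,c$ (from $r=\sum_i a_i$, $R=\sum_i i\,a_i$, etc., together with $d\divi n$ and $d\divi m$); this is immediate, but it is exactly the fact that lets you restrict the degenerate case to a failure among the four partitions.
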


We first prove that the $X(q)$ in Theorem~\ref{thm:CSP} is a polynomial in
$q$. We need two lemmas.

\begin{lem}\cite[Proposition~10.1 (iii)]{CSP}\label{lem:RSW}
  If $f(q) = h(q)/[k]_q \in\Z[q]$, and $h(q)\in \N[q]$ has symmetric, unimodal
  coefficient sequence, then $f(q)\in \N[q]$.
\end{lem}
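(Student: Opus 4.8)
The plan is to give the short, self-contained argument that underlies \cite[Proposition~10.1]{CSP}. Write $h(q)=\sum_i h_i q^i$ of degree $N$ and $f(q)=\sum_i f_i q^i$, adopting the convention that $h_i=f_i=0$ for indices outside the respective degree ranges. Since $[k]_q=1+q+\cdots+q^{k-1}$, comparing coefficients in the identity $h(q)=f(q)\,[k]_q$ yields the window-sum relation $h_i=f_{i-k+1}+f_{i-k+2}+\cdots+f_i$, and subtracting two consecutive such identities produces the key recurrence
\[
f_i-f_{i-k}=h_i-h_{i-1}.
\]

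First I would record two structural facts. Because $h\in\N[q]$ has a symmetric, unimodal coefficient sequence, its center of symmetry is $N/2$, so $h_i=h_{N-i}$ and the increments satisfy $h_i-h_{i-1}\ge 0$ for all $i\le N/2$. Because both $h$ and $[k]_q$ are palindromic (the latter about $(k-1)/2$), their quotient $f$ is palindromic as well; explicitly $f_i=f_{M-i}$ with $M=\deg f=N-k+1$. These two symmetries are exactly what will let nonnegativity on the lower half propagate to the entire polynomial. Here the hypothesis $f\in\Z[q]$ is used only to guarantee that $f$ is a genuine polynomial, so that the coefficient comparison and the symmetry manipulations are valid.

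The heart of the argument is a telescoping of the recurrence within a fixed residue class modulo $k$. For any index $i$ with $i\le N/2$, iterating $f_i=f_{i-k}+(h_i-h_{i-1})$ downward until the index becomes negative (where $f$ vanishes) gives
\[
f_i=\sum_{s\ge 0,\ i-sk\ge 0}\bigl(h_{i-sk}-h_{i-sk-1}\bigr).
\]
Every index appearing here satisfies $i-sk\le i\le N/2$, so every increment is $\ge 0$ by unimodality, whence $f_i\ge 0$. For the remaining coefficients, those with $N/2<i\le M$, I would invoke the palindromic symmetry of $f$: since $M-i<M-N/2=N/2-k+1\le N/2$, the reflected index $M-i$ lies in the range already handled, so $f_i=f_{M-i}\ge 0$. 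As $f_i=0$ outside $[0,M]$, this establishes $f\in\N[q]$.

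The one delicate point—and the step I would double-check most carefully—is the bookkeeping of the two distinct centers of symmetry: $h$ is centered at $N/2$ while $f$ is centered at the smaller value $M/2=(N-k+1)/2$. One must verify that reflecting any index $i>N/2$ through $M/2$ lands strictly below $N/2$, so that the two cases genuinely cover every nonzero coefficient of $f$ with no gap and no double counting. Once the inequality $M-i<N/2$ is confirmed for every $i$ with $N/2<i\le M$ (which reduces to $k\ge 1$), the argument is complete.
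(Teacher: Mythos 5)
Your argument is correct, but note that there is no internal proof to compare it against: the paper states this lemma purely as an imported result, citing Reiner--Stanton--White \cite[Proposition~10.1 (iii)]{CSP}, and never proves it. So what you have supplied is the missing self-contained argument, and it holds up under scrutiny. The coefficient recurrence $f_i - f_{i-k} = h_i - h_{i-1}$ is valid for all $i \in \Z$ under the zero-padding convention precisely because the hypothesis $f \in \Z[q]$ makes $f$ a genuine polynomial (otherwise the quotient is only a power series and the telescoping would never terminate); the increments $h_j - h_{j-1}$ are indeed nonnegative for all $j \le N/2$, since symmetry forces the unimodal peak to sit at the center; the palindromicity $f_i = f_{M-i}$ with $M = N-k+1$ follows from $f(q) = q^{N-k+1} f(1/q)$; and your inequality $M - i < N/2$ for $i > N/2$ (using $k \ge 1$) correctly shows the two cases cover every coefficient. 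The one unstated normalization is that you take ``symmetric'' to mean symmetric about $N/2$, i.e.\ $h_0 = h_N \neq 0$; if $h$ vanishes to order $a$ at $q=0$, one should first write $h = q^a g$, note that $f$ is then $q^a$ times $g(q)/[k]_q \in \Z[q]$, and apply the argument to $g$ --- a one-line reduction worth recording. Incidentally, this is also the application-relevant case, since in the paper the lemma is invoked for $h(q) = [n]_q \qbinom{k}{\lambda}$ and $h(q) = [N-n]_q\qbinom{N}{k}\qbinom{k}{\lambda}$ (suitably rewritten), both of which have nonzero constant term.
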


\begin{lem}\label{lem:polynomial}
  For nonnegative integers $N,n,k$ with $N\ge n$ and a partition
  $\lambda\in \Par(n,k)$, we have
\[
\frac{[n]_q}{[k]_q} \qbinom{k}{\lambda}\in \N[q], \qquad
\frac{[N-n]_q}{[N]_q} \qbinom{N}{k} \qbinom{k}{\lambda}\in \N[q].
\]
\end{lem}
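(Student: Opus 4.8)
The plan is to prove both claimed polynomiality statements by exhibiting each quantity as a quotient to which Lemma~\ref{lem:RSW} applies, i.e.\ as $h(q)/[k]_q$ (or $/[N]_q$) with $h(q)\in\N[q]$ having a symmetric and unimodal coefficient sequence. Recall that $q$-binomial coefficients are Gaussian polynomials, which are well known to have symmetric, unimodal coefficients; the multinomial analogue $\qbinom{k}{\lambda}$ shares this property, being (up to a power of $q$) a $q$-multinomial coefficient. So the strategy in both cases is to identify an honest polynomial numerator and control divisibility by the relevant cyclotomic factors packaged inside $[k]_q$ and $[N]_q$.

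For the first assertion, $\frac{[n]_q}{[k]_q}\qbinom{k}{\lambda}$ with $\lambda\in\Par(n,k)$, I would first write $\frac{[n]_q}{[k]_q}\qbinom{k}{\lambda}=\frac{[n]_q}{[n]_q!/\cdots}$ more explicitly and observe that the quotient $[n]_q/[k]_q$ need not itself be a polynomial, so the multinomial factor must supply the missing factors. The cleanest route is to recognize this as a known ``$q$-Kreweras'' type building block: indeed the classical identity $\frac{1}{[k]_q}\qbinom{n}{k-1}\qbinom{k}{\lambda}$ appearing in the Bessis--Reiner formula is already asserted to be a polynomial, and $\frac{[n]_q}{[k]_q}\qbinom{k}{\lambda}$ is a simpler cousin. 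I would prove it directly by checking that the numerator $[n]_q\cdot\qbinom{k}{\lambda}$, after factoring out the appropriate power of $q$ to symmetrize, is a genuine polynomial in $\N[q]$ whose coefficient sequence is symmetric and unimodal (using that a product of symmetric unimodal polynomials whose degrees align, such as $[n]_q$ times a $q$-multinomial, is again symmetric unimodal), and that $f(q):=\frac{[n]_q}{[k]_q}\qbinom{k}{\lambda}$ lies in $\Z[q]$; then Lemma~\ref{lem:RSW} forces $f(q)\in\N[q]$. The integrality $f(q)\in\Z[q]$ I would establish either by a direct cyclotomic-divisibility argument or by exhibiting a combinatorial/representation-theoretic model counting objects graded by $q$.

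For the second assertion, $\frac{[N-n]_q}{[N]_q}\qbinom{N}{k}\qbinom{k}{\lambda}$, the same template applies but the bookkeeping is heavier. Here I would group the factors as $\frac{[N-n]_q}{[N]_q}\qbinom{N}{k}\cdot\qbinom{k}{\lambda}$ and first treat $\frac{[N-n]_q}{[N]_q}\qbinom{N}{k}$. Writing $\qbinom{N}{k}=\frac{[N]_q!}{[k]_q![N-k]_q!}$, the factor $[N]_q$ cancels, leaving $\frac{[N-n]_q\,[N-1]_q!}{[k]_q!\,[N-k]_q!}$, which I would reassemble into a product of a $q$-binomial and the ratio $\frac{[N-n]_q}{[N-k]_q}$ or similar; the condition $N\ge n$ guarantees $[N-n]_q$ makes sense, and multiplying back by $\qbinom{k}{\lambda}$ reintroduces exactly the multinomial structure. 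The goal is again to present the whole thing as a single $h(q)/[N]_q$ with $h(q)$ symmetric unimodal in $\N[q]$, invoking Lemma~\ref{lem:RSW}; alternatively one can cancel $[N]_q$ against a cyclotomic factor inside the numerator first and then argue the reduced expression is a nonnegative polynomial.

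The main obstacle will be verifying the symmetry and unimodality hypothesis of Lemma~\ref{lem:RSW} for the specific numerators, rather than mere integrality. Integrality (membership in $\Z[q]$) is a routine cyclotomic-polynomial divisibility check: a polynomial is $[k]_q\,$-divisible precisely when every $k$th-cyclotomic factor $\Phi_e(q)$ with $e\mid k$ divides the numerator to sufficient multiplicity, and these multiplicities can be read off by counting multiples among the relevant index sets. The delicate part is that products of symmetric unimodal polynomials are symmetric unimodal only under alignment of their centers of symmetry, so I must carefully track the power of $q$ needed to center each factor and confirm the combined numerator is symmetric before concluding unimodality. I expect to lean on the standard fact (essentially a theorem of Sylvester) that $q$-binomials and $q$-multinomials are symmetric and unimodal, together with the principal specialization viewpoint, and the real work is arranging the cancellation of $[k]_q$ (resp.\ $[N]_q$) so that what remains visibly satisfies the lemma's hypotheses.
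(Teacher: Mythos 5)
Your skeleton is exactly the paper's: apply Lemma~\ref{lem:RSW} with denominators $[k]_q$ and $[N]_q$ and numerators $[n]_q\qbinom{k}{\lambda}$ and $[N-n]_q\qbinom{N}{k}\qbinom{k}{\lambda}$, which lie in $\N[q]$ and are symmetric and unimodal as products of such polynomials. The genuine gap is that the other hypothesis of Lemma~\ref{lem:RSW} --- membership in $\Z[q]$ --- is precisely where all the content of this lemma lives, and you defer it as ``a routine cyclotomic-divisibility check.'' It is not routine, because it is exactly the step that uses the hypothesis $\lambda\in\Par(n,k)$, i.e.\ $|\lambda|=n$; without that relation the statement is simply false (take $n=3$, $k=2$, $\lambda=(1^2)$: then $\frac{[n]_q}{[k]_q}\qbinom{k}{\lambda}=\frac{1+q+q^2}{1+q}\notin\Z[q]$), so no hypothesis-free divisibility bookkeeping can succeed. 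Concretely, what must be shown is: for every primitive $e$th root of unity $\omega$ with $e\mid k$, $e>1$, either $e\mid n$ (so $q-\omega$ divides $1-q^n$), or else $q-\omega$ divides $\qbinom{k}{\lambda}$ itself. The latter is the crux: writing $\lambda=(1^{m_1},2^{m_2},\dots)$, the multiplicity of $q-\omega$ in $(q;q)_s$ is $\flr{s/e}$, and since $e\nmid n=\sum_i i\,m_i$ forces some $m_i\not\equiv 0 \pmod e$, one gets the \emph{strict} inequality $\sum_i\flr{m_i/e}<\frac{k}{e}=\flr{k/e}$, so the numerator of $\qbinom{k}{\lambda}$ carries a surplus factor of $q-\omega$. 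Your phrase ``counting multiples among the relevant index sets'' sets up this count but never extracts the strict inequality, which is the one idea the proof needs.

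For the second expression your plan is also shakier than the paper's. The paper does not redo any counting: it writes $[N]_q=[N-n]_q+q^{N-n}[n]_q$, whence $\frac{[N-n]_q}{[N]_q}\qbinom{N}{k}\qbinom{k}{\lambda}=\qbinom{N}{k}\qbinom{k}{\lambda}-q^{N-n}\qbinom{N-1}{k-1}\cdot\frac{[n]_q}{[k]_q}\qbinom{k}{\lambda}$, reducing integrality to the first claim. Your proposed reassembly into ``a $q$-binomial times $\frac{[N-n]_q}{[N-k]_q}$ or similar'' is a dead end: $[N-k]_q$ has no reason to divide anything in sight, and that grouping does not reduce to the first statement (a direct cyclotomic count against $[N]_q$ can be made to work, but it hinges on the same strict inequality above, at divisors $e\mid N$ with $e\nmid n$). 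Finally, two smaller corrections: products (as opposed to sums) of symmetric unimodal polynomials in $\N[q]$ are \emph{automatically} symmetric and unimodal, so your worry about ``aligning centers of symmetry'' and extracting ``a power of $q$ to symmetrize'' is a red herring --- $[n]_q\qbinom{k}{\lambda}$ is already palindromic as it stands.
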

\begin{proof}
We claim that
\begin{equation}
  \label{eq:3}
\frac{[n]_q}{[k]_q} \qbinom{k}{\lambda}
=\frac{1-q^n}{1-q^k} \qbinom{k}{\lambda}\in \Z[q].
\end{equation}
Assuming the claim let us show the lemma.  By \eqref{eq:3} and the fact
$[N]_q=[N-n]_q+q^{N-n}[n]_q$ we also have
\begin{align*}
\frac{[N-n]_q}{[N]_q} \qbinom{N}{k} \qbinom{k}{\lambda}
&=\frac{[N]_q-q^{N-n}[n]_q}{[N]_q} \qbinom{N}{k} \qbinom{k}{\lambda}\\
&=\qbinom{N}{k} \qbinom{k}{\lambda} - 
q^{N-n} \qbinom{N-1}{k-1} \cdot 
\frac{[n]_q}{[k]_q} \qbinom{k}{\lambda} \in \Z[q].
\end{align*}
Since both $\frac{[n]_q}{[k]_q} \qbinom{k}{\lambda}$ and $\frac{[N-n]_q}{[N]_q}
\qbinom{N}{k} \qbinom{k}{\lambda}$ satisfy the conditions in
Lemma~\ref{lem:RSW} we obtain the lemma.

We now show \eqref{eq:3}. We will use the $q$-Pochhammer symbol $(q;q)_r =
(1-q)(1-q^2)\cdots(1-q^r)$. Note that $\qbinom{n}{k} = \frac{(q;q)_n}{(q;q)_k
  (q;q)_{n-k}}$.

 Since
\[
q^k-1 = \prod_{j=1}^k (q-\omega_j),
\]
where $\omega_1,\omega_2,\dots,\omega_k$ are the $k$th roots of unity, in order
to prove \eqref{eq:3} it is sufficient to show that $q-\omega_j$ divides
$(1-q^n) \qbinom{k}{\lambda}$ for all $j=1,2,\dots,k$. Fix an integer $j$ and
suppose $\omega_j$ is a primitive $r$th root of unity. Then $r$ divides
$k$. Note that $q-\omega_j$ divides $q^s-1$ if and only if $r$ divides $s$.
Note also that the multiplicity of $q-\omega_j$ as a factor of $q^s-1$ is at
most $1$. Thus the multiplicity of the factor $q-\omega_j$ in $(q;q)_s$ is equal
to $\flr{s/r}$.  We have two cases as follows. 

\textsc{Case 1}: $r$ divides $n$. Then $q-\omega_j$ divides $q^n-1$ and we
are done. 

\textsc{Case 2}: $r$ does not divide $n$.  Let $\lambda=(1^{m_1},2^{m_2},\dots,
\ell^{m_{\ell}})$. Then we have $n = \sum_{i=1}^\ell i \cdot m_i$, $k =
\sum_{i=1}^\ell m_i$, and 
\begin{equation}
  \label{eq:4}
\qbinom{k}{\lambda} = \frac{(q;q)_{m_1+m_2+\cdots+m_\ell}}
{(q;q)_{m_1} (q;q)_{m_2} \cdots (q;q)_{m_\ell}}. 
\end{equation}
The multiplicities of the factor $q-\omega_j$ in the numerator and denominator
of \eqref{eq:4} are respectively $\flr{k/r}$ and $\flr{m_1/r} + \flr{m_2/r} +
\cdots + \flr{m_\ell/r}$. Since $r$ does not divide $n$, at least one of
$m_1,m_2,\dots, m_\ell$ is not a multiple of $r$. Thus we have
\[
\flr{\frac{m_1}{r}} + \flr{\frac{m_2}r} + \cdots + \flr{\frac{m_\ell}r}
< \frac{m_1}{r} + \frac{m_2}r + \cdots + \frac{m_\ell}r = 
\frac{k}{r} = \flr{\frac{k}{r}},
\]
which implies that $q-\omega_j$ divides $\qbinom{k}{\lambda}$. This finishes the
proof of \eqref{eq:3}.
\end{proof}

\begin{prop}
  The $X(q)$ in Theorem~\ref{thm:CSP} is a polynomial in $q$.
\end{prop}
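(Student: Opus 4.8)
The plan is to observe that the only possible obstruction to $X(q)$ being a polynomial is the explicit denominator $[c]_q$: every other factor, namely the $q$-multinomial coefficients $\qbinom{n}{r}$, $\qbinom{m}{s}$, $\qbinom{r}{\alpha}$, $\qbinom{s}{\beta}$, $\qbinom{c}{\lambda}$, and $\qbinom{c}{\mu}$, already lies in $\N[q]$ by the standard fact that Gaussian multinomial coefficients are polynomials with nonnegative integer coefficients. (Here $c\ge 1$, since a connected annular noncrossing permutation has at least one connected cycle, so $[c]_q$ is a nonzero polynomial and the expression makes sense.) Hence it suffices to absorb $[c]_q$ into a single group of factors and show that group is a polynomial in $q$.

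The key step is to pair $[c]_q$ with the factor $[(n-R)(m-S)]_q\,\qbinom{c}{\lambda}$, exploiting that $\lambda\in\Par(n-R,c)$. First I would split off $[n-R]_q$ from $[(n-R)(m-S)]_q$: since $1-q^{n-R}$ divides $1-q^{(n-R)(m-S)}$ (as $(n-R)(m-S)$ is a multiple of $n-R$), we have
\[
\frac{[(n-R)(m-S)]_q}{[n-R]_q}=\frac{1-q^{(n-R)(m-S)}}{1-q^{n-R}}
=1+q^{n-R}+q^{2(n-R)}+\cdots+q^{(m-S-1)(n-R)}\in\N[q].
\]
Then I would invoke the first assertion of Lemma~\ref{lem:polynomial}, applied with its $n$ and $k$ taken to be $n-R$ and $c$ and its partition taken to be $\lambda\in\Par(n-R,c)$, to conclude that $\dfrac{[n-R]_q}{[c]_q}\qbinom{c}{\lambda}\in\N[q]$.

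Combining these two facts,
\[
\frac{[(n-R)(m-S)]_q}{[c]_q}\qbinom{c}{\lambda}
=\frac{[(n-R)(m-S)]_q}{[n-R]_q}\cdot\frac{[n-R]_q}{[c]_q}\qbinom{c}{\lambda}\in\N[q],
\]
as a product of two elements of $\N[q]$. Multiplying by the remaining factors $\qbinom{n}{r}\qbinom{m}{s}\qbinom{r}{\alpha}\qbinom{s}{\beta}\qbinom{c}{\mu}$, each in $\N[q]$, then shows that $X(q)\in\N[q]$, which is more than the claimed polynomiality. (By symmetry one could instead pair $[c]_q$ with $[(n-R)(m-S)]_q\,\qbinom{c}{\mu}$ using $\mu\in\Par(m-S,c)$.) The only genuine obstacle is choosing the correct grouping so that Lemma~\ref{lem:polynomial} applies cleanly; once $[c]_q$ is matched against the factor $[n-R]_q$ extracted from $[(n-R)(m-S)]_q$, the remainder of the argument is routine.
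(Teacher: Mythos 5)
Your proof is correct and follows essentially the same route as the paper: both absorb the denominator $[c]_q$ via the first assertion of Lemma~\ref{lem:polynomial} and split off the remaining factor of $[(n-R)(m-S)]_q$ as a geometric series (the paper phrases this via $[ab]_q=[b]_q[a]_{q^b}$). The only difference is cosmetic: you pair $[c]_q$ with $\lambda\in\Par(n-R,c)$ while the paper pairs it with $\mu\in\Par(m-S,c)$, a symmetric choice the paper itself uses elsewhere (Case 4 of the proof of Theorem~\ref{thm:CSP}).
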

\begin{proof}
By Lemma~\ref{lem:polynomial}, $\frac{[m-S]_q}{[c]_q} \qbinom{c}{\mu}$ is a
polynomial. Since
\begin{equation}
  \label{eq:11}
X(q) = 
\frac{[(n-R)(m-S)]_q}{[m-S]_q} \qbinom{n}{r}\qbinom{m}{s}
\qbinom{r}{\alpha} \qbinom{s}{\beta} \qbinom{c}{\lambda} 
\frac{[m-S]_q}{[c]_q} \qbinom{c}{\mu},
\end{equation}
and $[ab]_q = [a]_q [b]_{q^a} = [b]_q[a]_{q^b}$, we obtain that $X(q)$ is a
polynomial in $q$.
\end{proof}

The following evaluations of $q$-binomial coefficients at a root of unity are
well known, for instance see \cite[Exercise~96 in Chapter 1]{EC1}.

\begin{lem} \label{lem:eval}
  Suppose $w$ is a primitive $d$th root of unity and $n$ is divisible by
  $d$. Then
$[n]_{q=w} = n/d$ and 
\[
\Qbinom{n}{k}{q=w} = \left\{
  \begin{array}{ll}
    \binom{n/d}{k/d} & \mbox{if $k$ is divisible
      by $d$,}\\
    0 & \mbox{otherwise.}
  \end{array}\right.
\]
\end{lem}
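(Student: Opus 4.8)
The plan is to establish the two evaluations separately, in both cases exploiting the factorization of $q^j-1$ over $\mathbb{C}$ and the behaviour of the simple factor $(q-w)$, exactly as in the proof of Lemma~\ref{lem:polynomial}. Throughout I may take $d>1$, the case $d=1$ (where $w=1$) being immediate since then $[n]_{q=1}=n$ and $\Qbinom{n}{k}{q=1}=\binom{n}{k}$.

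For the $q$-integer, since $d\mid n$ I would factor
\[
\qint{n}=\frac{q^n-1}{q-1}=\frac{q^d-1}{q-1}\cdot\frac{q^n-1}{q^d-1}=\qint{d}\,[n/d]_{q^d},
\]
where $[n/d]_{q^d}=1+q^d+q^{2d}+\cdots+q^{n-d}$. At $q=w$ the reduced factor $[n/d]_{q^d}$ specializes to $1+1+\cdots+1=n/d$ because $w^d=1$, which is the recorded value. (Note that for $d>1$ the full polynomial $\qint{n}$ vanishes at $w$; the relevant quantity is the reduced factor $[n/d]_{q^d}$, and indeed in the polynomial $X(q)$ of Theorem~\ref{thm:CSP} every such $q$-integer occurs divided by $\qint{c}=\qint{d}\,[c/d]_{q^d}$, so the common factor $\qint{d}$ cancels before one specializes.)

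For the $q$-binomial I would write $\Qbinom{n}{k}{q}=\frac{(q;q)_n}{(q;q)_k(q;q)_{n-k}}$ and compute $\Qbinom{n}{k}{q=w}$ as a limit $q\to w$. Reusing the multiplicity count from Lemma~\ref{lem:polynomial}, the number of factors $(1-q^j)$ with $d\mid j$ occurring in $(q;q)_s$ is $\flr{s/d}$. Since $d\mid n$, the numerator contributes $n/d$ vanishing factors while the denominator contributes $\flr{k/d}+\flr{(n-k)/d}$, which equals $n/d$ when $d\mid k$ and $n/d-1$ otherwise. Hence if $d\nmid k$ the numerator carries a strictly higher order of vanishing and the limit is $0$, giving the second case of the lemma.

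When $d\mid k$ the orders match and the limit is finite; the main work is to identify it with $\binom{n/d}{k/d}$. I would separate each $(q;q)_s$ into its non-vanishing factors $(1-q^j)$ with $d\nmid j$ and its vanishing factors $(1-q^{di})$. Grouping the indices $j$ into consecutive blocks of length $d$ and using the standard cyclotomic identity $\prod_{l=1}^{d-1}(1-w^l)=d$ (from $\frac{x^d-1}{x-1}=\prod_{l=1}^{d-1}(x-w^l)$ at $x=1$), the non-vanishing parts of numerator and denominator both equal $d^{\,n/d}$ at $q=w$ and cancel. For the vanishing parts I would factor $1-q^{di}=(1-q^d)\cdot\frac{1-q^{di}}{1-q^d}$ with $\frac{1-q^{di}}{1-q^d}\to i$ as $q\to w$; the equal powers of $(1-q^d)$ in numerator and denominator cancel, and the surviving factors assemble into $\frac{(n/d)!}{(k/d)!\,((n-k)/d)!}=\binom{n/d}{k/d}$. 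The delicate step, and the one I expect to require the most care, is exactly this bookkeeping in the $d\mid k$ case: matching the vanishing factors across numerator and denominator so that the powers of $(1-q^d)$ cancel, and tracking the blockwise constants $d$, which must also cancel. Alternatively one can quote the $q$-Lucas theorem, of which this is the special case obtained by writing $n=(n/d)d$ and $k=\flr{k/d}d+(k\bmod d)$; but I would prefer the self-contained cyclotomic computation in order to stay consistent with the method of Lemma~\ref{lem:polynomial}.
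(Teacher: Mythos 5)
Your proof is correct, but note that the paper gives no proof of this lemma at all: it is stated as a well-known fact with a pointer to \cite[Exercise~96 in Chapter 1]{EC1}. Your self-contained cyclotomic computation is essentially the standard argument behind that exercise, and your bookkeeping checks out: the order of vanishing of $(q;q)_s$ at $q=w$ is $\flr{s/d}$, so the numerator of $\qbinom{n}{k}$ vanishes to order $n/d$ while the denominator vanishes to order $\flr{k/d}+\flr{(n-k)/d}$, which is $n/d$ when $d\mid k$ and $n/d-1$ otherwise (giving the zero case); and in the case $d\mid k$ the non-vanishing factors contribute $d^{n/d}$ to both numerator and denominator via $\prod_{l=1}^{d-1}(1-w^l)=d$, while the reduced factors $\frac{1-q^{di}}{1-q^d}\to i$ assemble into $\binom{n/d}{k/d}$. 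Beyond being more self-contained than the citation, your write-up also repairs a genuine abuse of notation in the paper's statement: for $d>1$ the polynomial $\qint{n}$ literally vanishes at $w$, so the assertion $[n]_{q=w}=n/d$ only makes sense through the factorization $\qint{n}=\qint{d}\,[n/d]_{q^d}$ (equivalently, as a limit of ratios such as $[(n-R)(m-S)]_q/[c]_q$), which is exactly how the lemma is deployed in the proof of Theorem~\ref{thm:CSP}, where the author indeed passes to the ratio $\frac{1-q^{m-S}}{1-q^c}$ before evaluating. What the citation buys is brevity for a standard fact; what your argument buys is a proof consistent in method with Lemma~\ref{lem:polynomial} and an explicit resolution of the $0/0$ subtlety that the paper leaves implicit.
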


Now we are ready to prove Theorem~\ref{thm:CSP}. 

\begin{proof}[Proof of Theorem~\ref{thm:CSP}]
  Suppose $(c_1,c_2)$ is an $(n,m)$-annular rotation of order $d$. Then $d$
  divides both $n$ and $m$.  By Theorem~\ref{thm:rot_inv} it is sufficient to
  show that for $w$ a primitive $d$th root of unity, we have
  \begin{equation}
    \label{eq:9}
X(w) =     d\cdot \frac{(\widehat{n}-\widehat{R})(\widehat{m}-\widehat{S})}{\widehat{c}}
\binom{\widehat{n}}{\widehat{r}}\binom{\widehat{m}}{\widehat{s}}
\binom{\widehat{r}}{\widehat{\alpha}} \binom{\widehat{s}}{\widehat{\beta}}
\binom{\widehat{c}}{\widehat{\lambda}} \binom{\widehat{c}}{\widehat{\mu}},
  \end{equation}
  if all of $n,m,c,r,s,R,S,\alpha,\beta,\lambda,\mu$ are divisible by $d$, and
  $X(w)=0$ otherwise. By Lemma~\ref{lem:eval} we get~\eqref{eq:9} when
  all of $n,m,c,r,s,R,S,\alpha,\beta,\lambda,\mu$ are divisible by $d$.  

  It remains to show that if at least one of
  $n,m,c,r,s,R,S,\alpha,\beta,\lambda,\mu$ is not divisible by $d$, then
  $X(w)=0$.  Since $d$ divides both $n$ and $m$, we have the following cases.

  \textsc{Case 1}: $r$ or $s$ is not divisible by $d$. By \eqref{eq:11}, $X(q)$
  is a polynomial divisible by $\qbinom{n}{r}\qbinom{m}{s}$. Thus, by
  Lemma~\ref{lem:eval}, we get $X(w)=0$. 

  \textsc{Case 2}: Both $r$ and $s$ are divisible by $d$, but $\alpha$ or
  $\beta$ is not. Suppose $\alpha=(1^{a_1},2^{a_2},\dots)$ is not divisible by
  $d$. Suppose moreover that $a_j$ is not divisible by $d$. Again by
  \eqref{eq:11}, $X(q)$ is divisible by 
\[
\qbinom{r}{\alpha} = \qbinom{r}{a_j}\qbinom{r-a_j}{a_1,\dots,a_{j-1},a_{j+1},\dots}.
\]
Thus, by Lemma~\ref{lem:eval}, we get $X(w)=0$. If $\beta$ is not divisible by
$d$, by the same arguments we get $X(w)=0$. 

\textsc{Case 3}: All of $r,s,\alpha,\beta$ are divisible by $d$, but $c$ is
not. Note that since $\alpha$ and $\beta$ are divisible by $d$, so are $R$ and
$S$. By \eqref{eq:11}, we can write
\[
X(q)=\frac{1-q^{m-S}}{1-q^c} Y(q)
\]
for a polynomial $Y(q)$. Since $m-S$ is divisible by $d$, but $c$ is not,
$\frac{1-w^{m-S}}{1-w^c}=0$ and we get $X(w)=0$.

\textsc{Case 4}: All of $r,s,\alpha,\beta,R,S,c$ are divisible by $d$, but
$\lambda$ or $\mu$ is not. Suppose $\lambda$ is not divisible by $d$. By
\eqref{eq:11}, $X(q)$ is divisible by $\qbinom{c}{\lambda}$. By the same
argument as in \textsc{Case 2} we obtain that $X(w)=0$. If $\mu$ is not
divisible by $d$, we can do the same thing using the following expression 
\[
X(q) = 
\frac{[(n-R)(m-S)]_q}{[n-R]_q} \qbinom{n}{r}\qbinom{m}{s}
\qbinom{r}{\alpha} \qbinom{s}{\beta} \qbinom{c}{\mu}
\frac{[n-R]_q}{[c]_q}\qbinom{c}{\lambda} .
\]

Thus in all cases we have $X(w)=0$, which finishes the proof. 
\end{proof}

\section{Annular $q$-Kreweras numbers}
\label{sec:annular-q-kreweras}

In this section we define annular versions of $q$-analogs of Kreweras, Narayana,
and Catalan numbers and evaluate their sums.

For brevity we will use the following throughout this section:
\begin{align*}
  X &= {c(c-1)},  \\
  Y &= {r(c+r)+s(c+s)},  \\
  Z &= {r(n-c-R)+s(m-c-S)},  \\
  W &=
  {r(R-r)+s(S-s)+c(n-R-c)+c(m-S-c)-\tau(\alpha)-\tau(\beta)-\tau(\lambda)-\tau(\mu)}.
\end{align*}

\begin{defn}
The \emph{annular $q$-Kreweras number
    $\Kre=\Kre(n,m;c,r,s,R,S;\alpha,\beta,\lambda,\mu)$} is defined by
\[
\Kre = q^X q^Y q^Z q^W \frac{[nm]_q}{[n]_q[m]_q} \frac{[2c]_q}{2} 
\frac{[n-R]_q[m-S]_q}{[c]_q^2}  \qbinom{n}{r}\qbinom{m}{s} 
\qbinom{r}{\alpha} \qbinom{s}{\beta} \qbinom{c}{\lambda} \qbinom{c}{\mu}.
\]
The \emph{annular $q$-Narayana number $\Nara_1=\Nara_1(n,m;c,r,s,R,S)$ of type
  $1$} is defined by
\[
\Nara_1 =  q^X q^Y q^Z \frac{[nm]_q}{[n]_q[m]_q} \frac{[2c]_{q}}{2}
\qbinom{n}{r}\qbinom{m}{s}
\qbinom{R-1}{r-1} \qbinom{S-1}{s-1} \qbinom{n-R}{c} \qbinom{m-S}{c}.
\]
The \emph{annular $q$-Narayana number $\Nara_2=\Nara_2(n,m;c,r,s)$ of type $2$}
is defined by
\[
\Nara_2 = q^X q^Y  \frac{[nm]_q}{[n]_q[m]_q} \frac{[2c]_{q}}{2} 
\qbinom{n}{r}\qbinom{m}{s} \qbinom{n}{r+c} \qbinom{m}{s+c}.
\]
The \emph{annular $q$-Narayana number
    $\Nara_3=\Nara_3(n,m;,c)$ of type $3$} is defined by
\[
\Nara_3 = q^X \frac{[nm]_q}{[n]_q[m]_q} \frac{[2c]_{q}}{2} 
\qbinom{2n}{n-c} \qbinom{2m}{m-c}.
\]
The \emph{annular $q$-Catalan number $\Cat(n,m)$} is defined by
\[
\Cat(n,m) = \frac{[nm]_q}{2[m+n]_q} \qbinom{2n}{n} \qbinom{2m}{m}.
\]
\end{defn}

In what follows we show that the above numbers are polynomials in $q$. Note
however that they are not necessarily polynomials of integer coefficients. For
instance, $\Cat(1,1) = (1+q)/2$.  We first show that $\Kre$ is a polynomial in
$q$.

\begin{prop}
  The annular $q$-Kreweras number
  $\Kre=\Kre(n,m;c,r,s,R,S;\alpha,\beta,\lambda,\mu)$ is a polynomial in $q$.
\end{prop}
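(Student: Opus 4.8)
The plan is to strip away the harmless parts first. The prefactor $q^Xq^Yq^Zq^W$ consists of units in the field of rational functions, and the factor $\tfrac12$ is a nonzero scalar; neither can contribute a denominator that is a nontrivial polynomial in $q$. So it suffices to prove that the rational function
\[
\frac{[nm]_q\,[2c]_q\,[n-R]_q\,[m-S]_q}{[n]_q\,[m]_q\,[c]_q^2}
\qbinom{n}{r}\qbinom{m}{s}\qbinom{r}{\alpha}\qbinom{s}{\beta}\qbinom{c}{\lambda}\qbinom{c}{\mu}
\]
is actually a polynomial, i.e. that each of the denominator factors $[n]_q$, $[m]_q$, and the two copies of $[c]_q$ cancels against the numerator. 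Since the six $q$-(multi)nomials are already polynomials, the whole problem reduces to distributing the four linear numerator factors $[nm]_q,[2c]_q,[n-R]_q,[m-S]_q$ (together with the $q$-binomials) so as to absorb the four linear denominators.

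The strategy is to reassociate the product into a handful of groups, each of which is visibly a polynomial, matching every denominator factor to one distinct group. The tools are the two forms of Lemma~\ref{lem:polynomial} and the splitting identity $[ab]_q=[b]_q[a]_{q^b}$. Concretely I would write $\Kre$ as $q^{X+Y+Z+W}\cdot\tfrac12$ times the product of the following factors: first, $\frac{[nm]_q}{[m]_q}=[n]_{q^m}$, which cancels $[m]_q$; second, $\frac{[2c]_q}{[c]_q}=1+q^c$ (using $[2c]_q=[c]_q[2]_{q^c}$), which cancels one copy of $[c]_q$; third, $\frac{[n-R]_q}{[n]_q}\qbinom{n}{r}\qbinom{r}{\alpha}$, which lies in $\N[q]$ by the second statement of Lemma~\ref{lem:polynomial} applied with $N=n\ge R$ and $\alpha\in\Par(R,r)$, and which cancels $[n]_q$; fourth, $\frac{[m-S]_q}{[c]_q}\qbinom{c}{\mu}$, which lies in $\N[q]$ by the first statement of Lemma~\ref{lem:polynomial} with $\mu\in\Par(m-S,c)$, and which cancels the remaining $[c]_q$. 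What is left over, namely $\qbinom{m}{s}$, $\qbinom{s}{\beta}$, and $\qbinom{c}{\lambda}$, are genuine polynomials, so the entire product is a polynomial in $q$.

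The step I expect to be the only real obstacle is precisely this bookkeeping of which factor pays for which denominator. The naive approach tries to kill both copies of $[c]_q$ by the first form of Lemma~\ref{lem:polynomial}, which would demand $\frac{[n-R]_q}{[c]_q}\qbinom{c}{\lambda}$ and $\frac{[m-S]_q}{[c]_q}\qbinom{c}{\mu}$ simultaneously, while the two $[n]_q,[m]_q$ cancellations also want $[n-R]_q$ and $[m-S]_q$; since each of $[n-R]_q$ and $[m-S]_q$ occurs only to the first power, this double demand creates a genuine conflict and the grouping fails. The resolution is to offload two of the four cancellations onto the cost-free identities $\frac{[nm]_q}{[m]_q}=[n]_{q^m}$ and $\frac{[2c]_q}{[c]_q}=1+q^c$, which use up the otherwise idle factors $[nm]_q$ and $[2c]_q$; this frees $[n-R]_q$ and $[m-S]_q$ to be spent exactly once each, so that all four denominator factors are absorbed without reusing any scarce linear factor, and the non-integrality of the coefficients (from the surviving $\tfrac12$) is consistent with the claim.
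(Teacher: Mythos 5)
Your proposal is correct and uses essentially the same decomposition as the paper: the paper's proof factors $\Kre$ as $q^Xq^Yq^Zq^W\,\frac{[2c]_q}{2[c]_q}\bigl(\frac{[n-R]_q}{[n]_q}\qbinom{n}{r}\qbinom{r}{\alpha}\bigr)\bigl(\frac{[m-S]_q}{[c]_q}\qbinom{c}{\mu}\bigr)\bigl(\frac{[nm]_q}{[m]_q}\qbinom{m}{s}\qbinom{s}{\beta}\qbinom{c}{\lambda}\bigr)$ and cites Lemma~\ref{lem:polynomial}, which is exactly your grouping of the four denominator factors. Your explicit remarks that $\frac{[nm]_q}{[m]_q}=[n]_{q^m}$ and $\frac{[2c]_q}{[c]_q}=1+q^c$, and your diagnosis of why the naive pairing of both copies of $[c]_q$ with $[n-R]_q$ and $[m-S]_q$ would conflict with cancelling $[n]_q$ and $[m]_q$, just make explicit what the paper leaves implicit.
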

\begin{proof}
Since
\[
\Kre = q^X q^Y q^Z q^W \frac{[2c]_q}{2[c]_q} 
\left( \frac{[n-R]_q}{[n]_q} \qbinom{n}{r}\qbinom{r}{\alpha} \right)
\left( \frac{[m-S]_q}{[c]_q} \qbinom{c}{\mu} \right)
\left( \frac{[nm]_q}{[m]_q} \qbinom{m}{s}
\qbinom{s}{\beta} \qbinom{c}{\lambda} \right),
\]
we are done by Lemma~\ref{lem:polynomial}. 
\end{proof}

In the introduction we saw that the sum of $q$-Kreweras numbers is equal to the
$q$-Narayana number, and the sum of $q$-Narayana numbers is equal to the
$q$-Catalan number. We show that annular versions of these numbers have similar
properties. In order to do this we prove three lemmas.

The following lemma is due to Reiner and Sommers \cite{ReinerSommers}.  We
include their elegant proof as well.
\begin{lem}\cite{ReinerSommers}\label{lem:sum2}
  Let $\tau(\lambda) = \sum_{i\ge1} \lambda_i' \lambda_{i+1}'$, where $\lambda'$
  is the transposition of $\lambda$. Then
\[
\sum_{\lambda\in \Par(n,k)}q^{k(n-k)-\tau(\lambda)} \qbinom{k}{\lambda}
=\qbinom{n-1}{k-1}.
\]
\end{lem}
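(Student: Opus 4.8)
The plan is to establish the identity
\[
\sum_{\lambda\in \Par(n,k)}q^{k(n-k)-\tau(\lambda)} \qbinom{k}{\lambda}
=\qbinom{n-1}{k-1}
\]
by reinterpreting the left-hand side through the conjugate partition and recognizing the resulting sum as a known $q$-binomial expansion. The key observation is that a partition $\lambda\in\Par(n,k)$ is completely determined by its conjugate $\lambda'$, which is a partition of $n$ whose largest part is exactly $k$ (since $\lambda$ has $k$ parts). Writing the columns of $\lambda$ via the multiplicities $m_i$ (so $\lambda=(1^{m_1},2^{m_2},\dots)$), the quantity $\tau(\lambda)=\sum_{i\ge1}\lambda_i'\lambda_{i+1}'$ has a clean meaning in terms of consecutive conjugate parts, and $\qbinom{k}{\lambda}=[k]_q!/\prod_i [m_i]_q!$ is a $q$-multinomial coefficient that counts, up to the appropriate $q$-weight, the ways of arranging the column lengths. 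My first step would therefore be to rewrite everything in terms of the conjugate partition $\mu=\lambda'$, turning the sum over $\lambda\in\Par(n,k)$ into a sum over partitions $\mu$ of $n$ with $\mu_1=k$.

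Next I would recognize the exponent $k(n-k)-\tau(\lambda)$ as a statistic with a telescoping or generating-function structure. The natural approach is induction on $k$ (or on $n$), peeling off the largest column or tracking the multiplicity $m_1$ of parts equal to $1$. Writing $\qbinom{k}{\lambda}=\qbinom{k}{m_1}\qbinom{k-m_1}{m_2,m_3,\dots}$ and separating the contribution of $m_1$ should let me set up a recursion that matches the Pascal-type recurrence $\qbinom{n-1}{k-1}=\qbinom{n-2}{k-2}+q^{\,?}\qbinom{n-2}{k-1}$ for the $q$-binomial coefficient. The main work is to verify that the power of $q$ produced by the statistic $k(n-k)-\tau(\lambda)$ matches the power dictated by the $q$-binomial recurrence after the split; this is a bookkeeping computation on how $\tau$ changes when a column is removed.

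Alternatively, and perhaps more cleanly, I would attempt a bijective or generating-function proof: the right-hand side $\qbinom{n-1}{k-1}=\sum_\nu q^{|\nu|}$ sums over partitions $\nu$ fitting inside a $(k-1)\times(n-k)$ box, so the goal is to biject such $\nu$ with pairs consisting of a partition $\lambda\in\Par(n,k)$ together with the weight $k(n-k)-\tau(\lambda)$. I expect the map sending $\lambda$ to a partition obtained by reading successive differences of the conjugate parts $\lambda_i'$ (which measure the ``staircase defect'' captured by $\tau$) to land inside exactly this box, with the complementary statistic giving the correct $q$-power.

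The hard part will be pinning down the exact combinatorial meaning of $\tau(\lambda)=\sum_i \lambda_i'\lambda_{i+1}'$ and showing that $k(n-k)-\tau(\lambda)$ is always nonnegative and distributes correctly. Since $\lambda_1'=k$ and $\sum_i\lambda_i'=n$, one has $\tau(\lambda)\le k\sum_{i\ge2}\lambda_i'=k(n-k)$ with equality only in degenerate cases, so nonnegativity is plausible, but matching it term-by-term against the $q$-binomial is where the real computation lives. I would expect the cleanest route to be the inductive one, using the recurrence for $\qbinom{n-1}{k-1}$ and carefully tracking the change in $\tau$ under removal of a single column.
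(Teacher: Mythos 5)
Your proposal is a plan rather than a proof: on both routes you sketch, the decisive step is explicitly deferred (``the main work is to verify\dots'', ``where the real computation lives''), and neither route is carried out. Your first move --- passing to the conjugate $\mu=\lambda'$, so that the sum runs over partitions of $n$ with $\mu_1=k$ --- is exactly the paper's first move, and your bijective route (interpreting $\qbinom{n-1}{k-1}$ as the generating function of partitions $\nu$ in a $(k-1)\times(n-k)$ box) is the paper's actual strategy. But what makes that strategy work is an identity you never write down: after conjugation the summand factors over consecutive parts of $\mu$,
\[
q^{k(n-k)-\tau(\lambda)}\qbinom{k}{\lambda}
=\prod_{i\ge1} q^{(k-\mu_i)\mu_{i+1}}\qbinom{\mu_i}{\mu_{i+1}},
\]
which follows from $m_i=\mu_i-\mu_{i+1}$ (so the $q$-multinomial telescopes into a product of $q$-binomials) together with $\sum_{i\ge1}(k-\mu_i)\mu_{i+1}=k(n-k)-\tau(\lambda)$. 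Given this factorization, the paper decomposes each $\nu$ in the box by a concrete geometric construction (drop a vertical from the northeast corner to the base, follow the northwest diagonal from its foot back to the boundary of $\nu$, and repeat), reads off from this broken path a partition $\mu$ with $\mu_1=k$, and checks that the $\nu$'s producing a fixed $\mu$ have generating function equal to the product above. You only ``expect'' such a map to exist; without defining it and verifying the weight count, the bijective route proves nothing.

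The inductive route contains, in addition to the deferral, a wrong prediction. Peeling off the first column, i.e.\ passing from $\lambda=(1^{m_1},2^{m_2},\dots)\in\Par(n,k)$ to $\bar\lambda=(1^{m_2},2^{m_3},\dots)\in\Par(n-k,j)$ with $j=k-m_1$, gives $\tau(\lambda)=kj+\tau(\bar\lambda)$ and $\qbinom{k}{\lambda}=\qbinom{k}{j}\qbinom{j}{\bar\lambda}$, and the exponent bookkeeping yields
\[
\sum_{\lambda\in\Par(n,k)}q^{k(n-k)-\tau(\lambda)}\qbinom{k}{\lambda}
=\sum_{j\ge0}q^{(k-j)(n-k-j)}\qbinom{k}{j}
\sum_{\bar\lambda\in\Par(n-k,j)}q^{j(n-k-j)-\tau(\bar\lambda)}\qbinom{j}{\bar\lambda},
\]
so after applying the induction hypothesis to the inner sum, what remains to prove is
\[
\qbinom{n-1}{k-1}=\sum_{j\ge0}q^{(k-j)(n-k-j)}\qbinom{k}{j}\qbinom{n-k-1}{j-1}.
\]
This is not the two-term Pascal-type recurrence you predicted; it is a $q$-Vandermonde convolution (the same identity the paper invokes for the third sum in Theorem~\ref{thm:kreweras sum}). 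So your inductive route can indeed be completed, but only after doing the $\tau$-bookkeeping you left open and replacing ``Pascal recurrence'' by $q$-Vandermonde; as written, the proposal establishes neither route, and this is the genuine gap.
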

\begin{proof}
Let $\lambda=(1^{m_1},2^{m_2},\dots)$. Then $m_1+2m_2+\dots=n$ and 
$m_1+m_2+\dots=k$. Since $\lambda'_i=m_i+m_{i+1}+\cdots$, we have
\[
q^{k(n-k)-\tau(\lambda)} \qbinom{k}{\lambda}
= q^{(k-\lambda'_1)\lambda'_{2}} \qbinom{\lambda'_1}{\lambda'_2}
q^{(k-\lambda'_2)\lambda'_{3}} \qbinom{\lambda'_2}{\lambda'_3} \cdots . 
\]
Thus we can rewrite the identity as follows.
\begin{equation}
  \label{eq:2}
\qbinom{n-1}{k-1} = \sum_{\mu\in \Par(n), \mu_1=k} 
q^{(k-\mu_1)\mu_{2}} \qbinom{\mu_1}{\mu_2}
q^{(k-\mu_2)\mu_{3}} \qbinom{\mu_2}{\mu_3} \cdots . 
\end{equation}
The left hand side of \eqref{eq:2} is the sum of $q^{|\nu|}$ for all partitions
$\nu$ contained in a $k\times(n-k)$ rectangle where the $k$th part of $\nu$ is
$0$.  For such a partition $\nu$ we define points $Q_1,P_1,Q_2,P_2,\dots$ as
follows.  Let $Q_1$ be the upper right corner of the rectangle. When $Q_i$ is
defined, let $P_i$ be the point on the base of the rectangle which is straightly
below $Q_i$. When $P_i$ is defined, let $Q_{i+1}$ be the intersection of the
border of $\nu$ and the northwest diagonal ray starting from $P_i$. We define
the sequence of points until we reach the bottom-left corner of the
rectangle. Since $\nu$ has no cells in the $k$th row of the rectangle, we can
always complete this sequence.  Let $\mu$ be the partition whose $i$th part is
equal to the length of the segment $P_{i-1}P_i$, where $P_0=Q_1$. Then
$\mu\in \Par(n)$ and $\mu_1=k$.  It is easy to see that the sum of $q^{|\nu|}$
for partitions $\nu$ which give $\mu$ is equal to the summand in \eqref{eq:2},
see Figure~\ref{fig:decomp}.  This proves \eqref{eq:2}.
\end{proof}

\begin{figure}
  \centering
\psset{unit=25pt}
\begin{pspicture}(0,-1)(10,9)
\psframe(0,0)(10,8)
\psline[linestyle=dotted](0,2)(2,0)
\psline[linestyle=dotted](2,3)(5,0)
\psline[linestyle=dotted](5,5)(10,0)
\psline[linestyle=dotted](5,0)(5,8)
\psline[linestyle=dotted](2,5)(10,5)
\psline[linestyle=dotted](2,0)(2,8)
\psline[linestyle=dotted](0,2)(2,2)
\psline[linestyle=dotted](0,3)(5,3)
\psline[linewidth=2pt](0,0)(0,2)(1,2)(1,2.5)(2,2.5)(2,3.5)(3,3.5)(3,4)(4,4)(4,5)
(6,5)(6,6)(9,6)(9,7)(10,7)(10,8)
\rput(5,8.5){\psscaleboxto(9.8,.5){\rotateright{\{}}} \rput(5,9){$n-k$}
\rput(10.5,4){\psscaleboxto(.5,7.8){\}}} \rput[l](11,4){$k=\mu_1$}
\rput(1,-.5){\psscaleboxto(1.8,-.5){\rotateright{\{}}} \rput(1,-1){$\mu_4$}
\rput(3.5,-.5){\psscaleboxto(2.8,-.5){\rotateright{\{}}} \rput(3.5,-1){$\mu_3$}
\rput(7.5,-.5){\psscaleboxto(4.8,-.5){\rotateright{\{}}} \rput(7.5,-1){$\mu_2$}
\rput(.5,.5){$P_4$} \rput(2.5,.5){$P_3$} \rput(5.5,.5){$P_2$}
\rput(10.5,-.5){$P_1$}
\rput(-.5,2){$Q_4$} \rput(1.5,3.5){$Q_3$}
\rput(4.5,5.5){$Q_2$}  \rput[lt](10.2,8.5){$Q_1=P_0$}
\rput(1,5.5){$q^{(k-\mu_3)\mu_4}$}
\rput(3.5,6.5){$q^{(k-\mu_2)\mu_3}$}
\rput(7,7){$\qbinom{\mu_1}{\mu_2}$}
\rput(3,4.5){$\qbinom{\mu_2}{\mu_3}$}
\rput(.5,2.5){$\qbinom{\mu_3}{\mu_4}$}
\end{pspicture}
  \caption{We decompose $\nu$ and get $\mu$.}
  \label{fig:decomp}
\end{figure}

\begin{lem}\label{lem:sum3}
For fixed integers $n,r,c$, we have
\[
\sum_{R\ge0} q^{r(n-c-R)} \qbinom{R-1}{r-1} \qbinom{n-R}{c}
= \qbinom{n}{r+c}.
\]
\end{lem}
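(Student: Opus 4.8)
The plan is to prove this by a weight-preserving decomposition, reading the Gaussian binomial coefficient as a generating function for subsets. Recall the standard fact that for any $N\ge k\ge0$,
\[
\qbinom{N}{k}=\sum_{A} q^{\operatorname{inv}(A)},
\]
the sum being over all $k$-element subsets $A\subseteq[N]$, where I take the \emph{right-inversion} statistic $\operatorname{inv}(A)=\#\{(a,u): a\in A,\ u\in[N]\setminus A,\ a<u\}$. (This statistic is equidistributed with the more familiar one counting unchosen elements below each chosen element, because $\qbinom{N}{k}$ is palindromic and the two statistics sum to $k(N-k)$ on every subset; so its generating function is again $\qbinom{N}{k}$.) Choosing the right-inversion statistic rather than the left one is the crux of the argument, as I explain below.

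First I would interpret the left-hand side as $\sum_{T}q^{\operatorname{inv}(T)}$ over all $(r+c)$-subsets $T\subseteq[n]$, and classify each $T$ by its $r$-th smallest element $R$. Then $T$ decomposes uniquely as $T=B\cup\{R\}\cup C$, where $B$ is an $(r-1)$-subset of $\{1,\dots,R-1\}$ and $C$ is a $c$-subset of the right block $\{R+1,\dots,n\}$; the range $r\le R\le n-c$ matches exactly the support of the summand on the right. The key step is to show that the statistic factorizes as
\[
\operatorname{inv}(T)=\operatorname{inv}(B)+r(n-c-R)+\operatorname{inv}(C),
\]
where $\operatorname{inv}(B)$ is computed in the ground set $\{1,\dots,R-1\}$ and $\operatorname{inv}(C)$ in the right block of size $n-R$. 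To verify this I would count the inversion pairs $(a,u)$ with $a$ chosen, $u$ unchosen, $a<u$, sorting them by which of the three regions (left block, the point $R$, right block) contains $a$ and $u$. The pairs internal to the left block give $\operatorname{inv}(B)$, those internal to the right block give $\operatorname{inv}(C)$, and every one of the $r$ chosen elements $a\le R$ (that is, $B$ together with $R$) pairs with each of the $n-R-c$ unchosen elements of the right block, contributing precisely $r(n-R-c)=r(n-c-R)$; no other cross pairs occur.

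Summing $q^{\operatorname{inv}(T)}=q^{\operatorname{inv}(B)}\,q^{r(n-c-R)}\,q^{\operatorname{inv}(C)}$ over all $(B,R,C)$, and using $\sum_B q^{\operatorname{inv}(B)}=\qbinom{R-1}{r-1}$ and $\sum_C q^{\operatorname{inv}(C)}=\qbinom{n-R}{c}$, then yields the identity. The main obstacle—indeed the only delicate point—is pinning down the correct statistic so that the cross term is exactly $q^{r(n-c-R)}$: the left-inversion statistic would instead produce a factor $q^{(R-r)(c+1)}$ and hence prove a different (though equally valid) identity, so the choice of the right-inversion statistic is what makes the prefactor in Lemma~\ref{lem:sum3} come out correctly. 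An alternative would be a purely algebraic derivation from a form of the $q$-Chu--Vandermonde identity after shifting $R$, but the combinatorial route above is cleaner and keeps the role of the exponent $r(n-c-R)$ transparent.
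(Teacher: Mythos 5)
Your proof is correct, and it is essentially the same argument the paper sketches: the paper classifies partitions inside an $(n-r-c)\times(r+c)$ rectangle by the largest width-$r$ rectangle they contain, and under the standard bijection between $(r+c)$-subsets of $[n]$ and such partitions (which sends your right-inversion statistic to the partition size), that classification is exactly your classification by the $r$-th smallest element $R$, with the rectangle width equal to $n-c-R$. The only difference is presentational — you work in the subset/inversion model and spell out the cross-term computation in full, where the paper gives a one-line sketch in the partitions-in-a-box model.
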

\begin{proof}
  This can be proved by a standard technique considering the largest rectangle
  with width $r$ contained in partitions inside an $(n-r-c)\times(r+c)$
  rectangle.
\end{proof}

\begin{lem}\label{lem:sum1}
Let $n$, $m$, and $k$ be any nonnegative integers. Then 
\[
\sum_{c\ge0} q^{c(c-1+k)} [2c+k]_q 
\qbinom{2n+k}{n-c}\qbinom{2m+k}{m-c}
=\frac{[n+k]_q [m+k]_q}{[n+m+k]_q}
\qbinom{2n+k}{n+k} \qbinom{2m+k}{m+k}.
\]
\end{lem}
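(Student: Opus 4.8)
The plan is to identify the left-hand side with a very-well-poised basic hypergeometric series and to evaluate it by Rogers' ${}_6\phi_5$ summation. First I would divide the asserted identity by $\qbinom{2n+k}{n}\qbinom{2m+k}{m}$ (note $\qbinom{2n+k}{n}=\qbinom{2n+k}{n+k}$), so that it suffices to prove
\[
\sum_{c\ge0} q^{c(c-1+k)}[2c+k]_q\, \frac{\qbinom{2n+k}{n-c}}{\qbinom{2n+k}{n}}\, \frac{\qbinom{2m+k}{m-c}}{\qbinom{2m+k}{m}} = \frac{[n+k]_q[m+k]_q}{[n+m+k]_q}.
\]
Next I would rewrite each ratio of $q$-binomials in terms of $q$-shifted factorials $(x;q)_c=(1-x)(1-xq)\cdots(1-xq^{c-1})$, using
\[
\frac{\qbinom{2n+k}{n-c}}{\qbinom{2n+k}{n}} = (-1)^c q^{nc-\binom{c}{2}}\, \frac{(q^{-n};q)_c}{(q^{n+k+1};q)_c}
\]
and the analogue for $m$, together with $[2c+k]_q=(1-q^k q^{2c})/(1-q)$. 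The decisive simplification happens here: the two factors $(-1)^c$ multiply to $1$, and the exponents $nc-\binom c2$, $mc-\binom c2$ combine with the weight $c(c-1+k)$ to give exactly $(n+m+k)c$, so the quadratic part of the exponent disappears and the summand becomes a genuine hypergeometric term.

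After this rewriting the left-hand side equals $\frac{1}{1-q}$ times
\[
\sum_{c\ge0}(1-aq^{2c})\, \frac{(b;q)_c\,(c';q)_c}{(aq/b;q)_c\,(aq/c';q)_c}\left(\frac{a}{bc'}\right)^c, \qquad a=q^{k},\ b=q^{-n},\ c'=q^{-m}.
\]
I would then recognize this as $(1-a)$ times Rogers' very-well-poised ${}_6\phi_5$ series, with the free parameter specialized to $d=q$: indeed $1-aq^{2c}$ equals $(1-a)$ times the very-well-poised factor $\frac{(qa^{1/2};q)_c(-qa^{1/2};q)_c}{(a^{1/2};q)_c(-a^{1/2};q)_c}$, and choosing $d=q$ makes $(d;q)_c=(q;q)_c$ cancel the mandatory $(q;q)_c$ in the denominator while $(aq/d;q)_c=(a;q)_c$ cancels the $(a;q)_c$ in the numerator, the argument becoming $aq/(bc'd)=a/(bc')$. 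Applying Rogers' summation and telescoping the products by $(x;q)_\infty=(1-x)(xq;q)_\infty$ then gives $(1-a)\cdot\frac{(1-a/b)(1-a/c')}{(1-a)(1-a/bc')}=\frac{(1-a/b)(1-a/c')}{1-a/bc'}$. Substituting $a=q^k$, $b=q^{-n}$, $c'=q^{-m}$ turns this into $(1-q^{n+k})(1-q^{m+k})/(1-q^{n+m+k})$, and combining with the prefactor $\frac{1}{1-q}$ and clearing the common powers of $1-q$ yields $\frac{[n+k]_q[m+k]_q}{[n+m+k]_q}$; restoring the two central binomials gives the stated closed form.

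Two points require care. Since $b=q^{-n}$ and $c'=q^{-m}$ are negative powers of $q$, the series terminates at $c=\min(n,m)$, so Rogers' formula is used in its terminating form and all products are finite. More delicately, the intended application is the case $k=0$, where $[k]_q=0$; to avoid ever dividing by $[0]_q$ I would carry the factor $1-aq^{2c}$ (rather than $\frac{1-aq^{2c}}{1-a}$) throughout and use $[2c+k]_q=(1-aq^{2c})/(1-q)$ directly, so that every step remains a valid rational-function identity for all $k\ge0$, including $a=1$. I expect the main obstacle to be exactly the bookkeeping in the second paragraph — producing the cancellation of the $(-1)^c$ and $q^{-\binom c2}$ factors and spotting the specialization $d=q$ that exhibits the sum as very-well-poised even though no explicit $(a;q)_c$ factor is present. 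A more elementary but messier alternative would start from the $q$-absorption identity
\[
[2c+k]_q\qbinom{2n+k}{n-c}=q^{\,c-n}[2n+k]_q\left(\qbinom{2n+k-1}{n-c}-\qbinom{2n+k-1}{n-c-1}\right)
\]
and apply summation by parts, but this leads to a recursion rather than an outright evaluation, so I would prefer the ${}_6\phi_5$ route.
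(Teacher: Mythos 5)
Your proposal is correct, but it takes a genuinely different route from the paper. You normalize by $\qbinom{2n+k}{n}\qbinom{2m+k}{m}$, convert to $q$-shifted factorials, and recognize the sum as $(1-a)$ times a terminating very-well-poised ${}_6\phi_5$ with $a=q^k$, $b=q^{-n}$, $c'=q^{-m}$ and free parameter $d=q$; Rogers' summation then yields the closed form, and your precaution of carrying the factor $1-aq^{2c}$ (never dividing by $1-a$) correctly makes every step a rational-function identity valid at $k=0$. The paper instead proceeds by pure telescoping: it verifies the polynomial identity
\[
q^{c(c-1+k)}[n+m+k]_q[2c+k]_q
= q^{c(c-1+k)}[n+c+k]_q[m+c+k]_q - q^{(c+1)(c+k)}[n-c]_q[m-c]_q,
\]
absorbs the linear factors via $[n+c+k]_q\qbinom{2n+k}{n-c}=[2n+k]_q\qbinom{2n+k-1}{n-c}$ and $[n-c]_q\qbinom{2n+k}{n-c}=[2n+k]_q\qbinom{2n+k-1}{n-c-1}$, and observes that the resulting sum telescopes to its $c=0$ term $\qbinom{2n+k-1}{n}\qbinom{2m+k-1}{m}$, which converts to the right-hand side. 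Notably, the ``messier alternative'' you sketched and set aside at the end---absorption plus summation by parts---is essentially the paper's proof, and contrary to your expectation it does not end in a recursion: the trick is to couple the two circles by factoring $[n+m+k]_q[2c+k]_q$ as above, so that both absorption identities fire simultaneously and the whole sum collapses in one line. What your route buys is context and generality: the lemma is exposed as a specialization of a classical summation rather than an ad hoc identity, and the insertion of further parameters becomes transparent; the cost is invoking Gasper--Rahman machinery and the extra care needed at $a=1$, neither of which the paper's self-contained polynomial argument requires.
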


\begin{proof}
  It is straightforward to check that
\begin{equation}
  \label{eq:5}
q^{c(c-1+k)} [n+m+k]_q [2c+k]_q =
q^{c(c-1+k)} [n+c+k]_q [m+c+k]_q - q^{(c+1)(c+k)}[n-c]_q[m-c]_q. 
\end{equation}
Let $\sigma(c) = c(c-1+k)$. Then by \eqref{eq:5} the left hand side is equal
to
\begin{align*}
&\frac{1}{[n+m+k]_q} \sum_{c\ge0} 
\big( 
q^{\sigma(c)} [n+c+k]_q [m+c+k]_q 
- q^{\sigma(c+1)}[n-c]_q[m-c]_q \big)
\qbinom{2n+k}{n-c}\qbinom{2m+k}{m-c}\\
&= \frac{[2n+k]_q [2m+k]_q}{[n+m+k]_q} 
\sum_{c\ge0} \left( 
q^{\sigma(c)} 
\qbinom{2n+k-1}{n-c} \qbinom{2m+k-1}{m-c} 
- q^{\sigma(c+1)}
\qbinom{2n+k-1}{n-c-1} \qbinom{2m+k-1}{m-c-1}
\right)\\
&= \frac{[2n+k]_q [2m+k]_q}{[n+m+k]_q} 
\qbinom{2n+k-1}{n} \qbinom{2m+k-1}{m}\\
&=\frac{[n+k]_q [m+k]_q}{[n+m+k]_q}
\qbinom{2n+k}{n+k} \qbinom{2m+k}{m+k}. 
\end{align*}
\end{proof}

\begin{thm}\label{thm:kreweras sum}
We have
\begin{align*}
  \sum_{\substack{\alpha\in \Par(R,r)\\
\beta\in \Par(S,s)\\
\lambda\in \Par(n-R,c)\\
\mu\in \Par(m-S,c)}}
\Kre(n,m;c,r,s,R,S;\alpha,\beta,\lambda,\mu)
& = \Nara_1(n,m;c,r,s,R,S),\\
\sum_{R,S\ge0} \Nara_1(n,m;c,r,s,R,S) &= \Nara_2(n,m;c,r,s),\\
\sum_{r,s\ge0} \Nara_2(n,m;c,r,s) &= \Nara_3(n,m;c),\\
\sum_{c\ge0} \Nara_3(n,m;c) &= \Cat(n,m).
\end{align*}
\end{thm}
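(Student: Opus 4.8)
The plan is to prove the four identities separately, in the order listed, by peeling off one group of summation variables at a time; each identity reduces to one of the three lemmas just proved (or to the $q$-Chu--Vandermonde identity), once I observe that the relevant exponent among $X,Y,Z,W$ factors cleanly along the summation variables.

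For the first identity I would note that $X$, $Y$, $Z$ and every other prefactor are independent of $\alpha,\beta,\lambda,\mu$, so they pull out of the sum, leaving only $q^W\qbinom{r}{\alpha}\qbinom{s}{\beta}\qbinom{c}{\lambda}\qbinom{c}{\mu}$ to be summed. The crucial point is that $W$ splits as a sum of four independent pieces, $r(R-r)-\tau(\alpha)$, $s(S-s)-\tau(\beta)$, $c(n-R-c)-\tau(\lambda)$, and $c(m-S-c)-\tau(\mu)$, one attached to each of $\alpha,\beta,\lambda,\mu$. The sum therefore factors into four independent sums, each exactly of the shape in Lemma~\ref{lem:sum2} (with $(N,k)$ equal to $(R,r)$, $(S,s)$, $(n-R,c)$, $(m-S,c)$ respectively). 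Applying that lemma four times produces $\qbinom{R-1}{r-1}\qbinom{S-1}{s-1}\qbinom{n-R-1}{c-1}\qbinom{m-S-1}{c-1}$, and the surviving prefactor $\frac{[n-R]_q[m-S]_q}{[c]_q^2}$ is absorbed by the elementary identity $\frac{[N]_q}{[c]_q}\qbinom{N-1}{c-1}=\qbinom{N}{c}$ with $N=n-R$ and $N=m-S$, yielding exactly $\Nara_1$.

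The remaining three identities follow the same template. For the second, only $Z=r(n-c-R)+s(m-c-S)$ depends on $R,S$, so $q^Z$ separates as $q^{r(n-c-R)}q^{s(m-c-S)}$ and the sum factors into an $R$-sum and an $S$-sum, each precisely Lemma~\ref{lem:sum3} (the second with $n,r$ replaced by $m,s$), giving $\qbinom{n}{r+c}\qbinom{m}{s+c}=\Nara_2$. For the third, only $Y=r(c+r)+s(c+s)$ depends on $r,s$; after factoring one is left with $\sum_r q^{r(c+r)}\qbinom{n}{r}\qbinom{n}{r+c}$, which is the $q$-Chu--Vandermonde identity $\qbinom{M+N}{k}=\sum_j q^{j(N-k+j)}\qbinom{M}{k-j}\qbinom{N}{j}$ with $M=N=n$, $k=n-c$, after rewriting $\qbinom{n}{n-c-j}=\qbinom{n}{c+j}$; thus the $r$-sum equals $\qbinom{2n}{n-c}$ and likewise the $s$-sum equals $\qbinom{2m}{m-c}$, producing $\Nara_3$. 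Finally, $X=c(c-1)$ is exactly the exponent in Lemma~\ref{lem:sum1} at $k=0$, so that lemma immediately turns $\frac12\frac{[nm]_q}{[n]_q[m]_q}\sum_c q^{c(c-1)}[2c]_q\qbinom{2n}{n-c}\qbinom{2m}{m-c}$ into $\Cat(n,m)$.

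Once the factorizations are spotted the argument is largely bookkeeping, so I do not expect a deep obstacle. The step requiring genuine care is the first identity, where one must correctly split the single exponent $W$ into its four $\tau$-containing summands, match each to the exponent $k(N-k)$ of Lemma~\ref{lem:sum2} with the right $(N,k)$, and then remember to convert the residual $\frac{[N]_q}{[c]_q}\qbinom{N-1}{c-1}$ factors back into $\qbinom{N}{c}$. The third identity is the only one that appeals to a result outside the three lemmas, namely $q$-Chu--Vandermonde, which is standard.
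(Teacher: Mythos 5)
Your proposal is correct and follows exactly the paper's own route: the first, second, and fourth identities via Lemma~\ref{lem:sum2}, Lemma~\ref{lem:sum3}, and Lemma~\ref{lem:sum1} (at $k=0$) respectively, and the third via $q$-Vandermonde. The paper states this in one line and leaves the factorization of $W$, the twofold/fourfold application of the lemmas, and the absorption of $\frac{[N]_q}{[c]_q}\qbinom{N-1}{c-1}$ into $\qbinom{N}{c}$ implicit; your write-up supplies precisely that bookkeeping, and it checks out.
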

\begin{proof}
  The first, second, and fourth identities follow from Lemmas~\ref{lem:sum2},
  \ref{lem:sum3}, and \ref{lem:sum1}, respectively.  The third identity follows
  from the $q$-Vandermonde's identity:
\[
\sum_{i\ge0} q^{i(m-k+i)} \qbinom{m}{k-i} \qbinom{n}{i} 
=\qbinom{m+n}{k}.
\]
\end{proof}

It is easy to see that we can replace the polynomial $X(q)$ in
Theorem~\ref{thm:CSP} with $\Kre$. Thus by Theorems~\ref{thm:CSP} and
\ref{thm:kreweras sum} we obtain the following.

\begin{thm}\label{thm:mcsp}
The following exhibit the cyclic sieving phenomenon:
\[
(\anc(n,m;c,r,s,R,S;\alpha,\beta,\lambda,\mu), C,
\Kre(n,m;c,r,s,R,S;\alpha,\beta,\lambda,\mu)),
\]
\[
(\anc(n,m;c,r,s,R,S), C, \Nara_1(n,m;c,r,s,R,S)),
\]
\[
(\anc(n,m;c,r,s), C, \Nara_2(n,m;c,r,s)),
\]
\[
(\anc(n,m;c), C, \Nara_3(n,m;c)),
\]
\[
(\anc(n,m), C, \Cat(n,m)).
\]
\end{thm}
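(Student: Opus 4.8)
The plan is to prove the first triple directly by upgrading Theorem~\ref{thm:CSP} from the polynomial $X(q)$ to $\Kre$, and then to obtain the remaining four triples by summation, combining Theorem~\ref{thm:kreweras sum} with the elementary fact that the CSP is additive over $C$-equivariant disjoint unions. The genuinely new content already lives in Theorems~\ref{thm:CSP} and~\ref{thm:kreweras sum}; Theorem~\ref{thm:mcsp} is their packaging.

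First I would handle $(\anc(n,m;c,r,s,R,S;\alpha,\beta,\lambda,\mu), C, \Kre)$. By the preceding proposition $\Kre$ is a polynomial, and Theorem~\ref{thm:CSP} already gives the CSP with $X(q)$, so it suffices to check $\Kre(w(g))=X(q)|_{q=w(g)}$ for every $g\in C$, where $w(g)$ is a primitive $d$-th root of unity and $d=\operatorname{ord}(g)$ divides $\gcd(n,m)$. Comparing the two formulas, $\Kre$ is obtained from $X(q)$ by multiplying by $q^{X+Y+Z+W}$ and by replacing the factor $[(n-R)(m-S)]_q/[c]_q$ with $\frac{[nm]_q}{[n]_q[m]_q}\frac{[2c]_q}{2}\frac{[n-R]_q[m-S]_q}{[c]_q^2}$. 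If some parameter is not divisible by $d$, then $\Kre(w)=0$ by exactly the case analysis in the proof of Theorem~\ref{thm:CSP}, since the same $q$-binomial factors vanish, matching $X(w)=0$. If all of $n,m,c,r,s,R,S,\alpha,\beta,\lambda,\mu$ are divisible by $d$, then Lemma~\ref{lem:eval} gives $\frac{[nm]_w}{[n]_w[m]_w}=d$, $\frac{[2c]_w}{2}=\widehat c$, and $\frac{[n-R]_w[m-S]_w}{[c]_w[(n-R)(m-S)]_w}=\frac{1}{d\,\widehat c}$, which combine to $d\cdot\widehat c\cdot\frac{1}{d\,\widehat c}=1$, so that $\Kre(w)=w^{X+Y+Z+W}\,X(w)$. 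It remains to see $w^{X+Y+Z+W}=1$, i.e. $d\mid X+Y+Z+W$: the term $X=c(c-1)$ and every product appearing in $Y$, in $Z$, and in the first four summands of $W$ has a factor divisible by $d$, while each $\tau(\cdot)$ is divisible by $d$ (in fact $d^2$) because the conjugate-partition entries $\alpha_i',\beta_i',\lambda_i',\mu_i'$ are sums of multiplicities each divisible by $d$. Hence $\Kre(w)=X(w)$ for every $w=w(g)$, which is precisely the remark that $X(q)$ may be replaced by $\Kre$.

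For the remaining triples I would invoke the principle that if a cyclic group $C$ acts on $Y=\bigsqcup_j Y_j$ with each $Y_j$ a $C$-stable subset and each $(Y_j,f_j,C)$ exhibits the CSP, then $(Y,\sum_j f_j,C)$ does too, since for every $g\in C$ one has $|\{y\in Y:g(y)=y\}|=\sum_j|\{y\in Y_j:g(y)=y\}|=\sum_j f_j(w(g))$. An annular rotation merely relabels the points of the two circles cyclically, hence preserves cycle sizes and the connected/exterior/interior structure, so it fixes all of $c,r,s,R,S$ and the four cycle types; thus each coarser set is a $C$-stable disjoint union of finer ones. Feeding the four identities of Theorem~\ref{thm:kreweras sum} into this principle in turn — $\sum_{\alpha,\beta,\lambda,\mu}\Kre=\Nara_1$, then $\sum_{R,S}\Nara_1=\Nara_2$, then $\sum_{r,s}\Nara_2=\Nara_3$, then $\sum_c\Nara_3=\Cat(n,m)$ — propagates the CSP from the first triple up through $\Nara_1,\Nara_2,\Nara_3$ to $\Cat(n,m)$, giving the remaining four statements. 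I do not expect a serious obstacle: the only points needing care are the divisibility $d\mid X+Y+Z+W$ above (especially that each $\tau(\cdot)$ is divisible by $d$) and the $C$-equivariance of the fixed-statistics decompositions, both of which are routine verifications rather than real difficulties.
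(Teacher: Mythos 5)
Your proposal is correct and follows essentially the same route as the paper, whose entire proof consists of the observation that $X(q)$ in Theorem~\ref{thm:CSP} may be replaced by $\Kre$ together with an appeal to Theorem~\ref{thm:kreweras sum}; your root-of-unity evaluation (including the check that $d$ divides the exponent $X+Y+Z+W$) and the disjoint-union/summation argument are precisely the details the paper leaves implicit. The only point to tighten is your claim that the vanishing cases follow from ``exactly the case analysis'' of Theorem~\ref{thm:CSP}: the groupings into polynomial factors used there, e.g.\ \eqref{eq:11}, must be redone for $\Kre$, because its extra factors such as $[nm]_q/([n]_q[m]_q)$ and $1/[c]_q^2$ are not themselves polynomials, and a vanishing $q$-binomial factor kills the product only when its cofactor is a polynomial (equivalently, one can count multiplicities of the root $w$ in numerator and denominator) --- a routine but not literal adaptation.
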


Considering the annular rotation of order $1$ in Theorem~\ref{thm:mcsp},
i.e. the identity action, we obtain the following.

\begin{cor}
We have
\begin{align}
\label{eq:anc}
\#\anc(n,m;c,r,s,R,S;\alpha,\beta,\lambda,\mu)
&=\frac{(n-R)(m-S)}{c}  \binom{n}{r}\binom{m}{s} 
\binom{r}{\alpha} \binom{s}{\beta} \binom{c}{\lambda} \binom{c}{\mu},\\
\#\anc(n,m;c,r,s,R,S)
&=c \binom{n}{r}\binom{m}{s}
\binom{R-1}{r-1} \binom{S-1}{s-1} \binom{n-R}{c} \binom{m-S}{c},\\
\#\anc(n,m;c,r,s)
&= c \binom{n}{r}\binom{m}{s} \binom{n}{r+c} \binom{m}{s+c},\\
\#\anc(n,m;c)
&= c \binom{2n}{n-c} \binom{2m}{m-c},\\
\label{eq:anc_cat}
\#\anc(n,m)
&= \frac{nm}{2(m+n)} \binom{2n}{n} \binom{2m}{m}.
\end{align}
\end{cor}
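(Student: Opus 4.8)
The plan is to specialize Theorem~\ref{thm:mcsp} to the annular rotation of order $d=1$, i.e. the identity element of $C$. First I would recall the defining property of the CSP, namely that $X(w(c)) = |\{x\in X: c(x)=x\}|$ for every $c\in C$, where $w$ sends a generator of $C$ to a primitive $|C|$th root of unity. Taking $c$ to be the identity makes every $x\in X$ a fixed point, so the right-hand side collapses to $|X|$; moreover $w$ of the identity is $1$, the unique primitive first root of unity, so the left-hand side is simply $X(q)|_{q=1}$. Thus each of the five CSP triples in Theorem~\ref{thm:mcsp} immediately gives $|X| = X(q)|_{q=1}$, and the only remaining work is to set $q=1$ in each of $\Kre$, $\Nara_1$, $\Nara_2$, $\Nara_3$, and $\Cat$.

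Next I would carry out these five evaluations, which are routine. Using $[N]_q|_{q=1}=N$, $\qbinom{N}{k}|_{q=1}=\binom{N}{k}$, and $\qbinom{k}{\lambda}|_{q=1}=\binom{k}{\lambda}$, every prefactor $q^X q^Y q^Z q^W$ specializes to $1$, each factor $[2c]_q/2$ to $c$, the factor $[nm]_q/([n]_q[m]_q)$ to $1$, and $[n-R]_q[m-S]_q/[c]_q^2$ to $(n-R)(m-S)/c^2$. Assembling these, $\Kre|_{q=1}$ becomes $\frac{(n-R)(m-S)}{c}\binom{n}{r}\binom{m}{s}\binom{r}{\alpha}\binom{s}{\beta}\binom{c}{\lambda}\binom{c}{\mu}$, which is \eqref{eq:anc}; the specializations of $\Nara_1$, $\Nara_2$, $\Nara_3$, and $\Cat$ yield the remaining four formulas in the same mechanical way, the last giving \eqref{eq:anc_cat}.

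I do not expect a genuine obstacle here, since all the substance is already contained in Theorems~\ref{thm:CSP}, \ref{thm:kreweras sum}, and \ref{thm:mcsp}; this statement is just their value at $q=1$. The only two points that warrant a moment of care are that each of these $q$-expressions is a bona fide polynomial---so that evaluation at $q=1$ is legitimate---which was already established in the preceding propositions, and that $c\ge1$ holds for connected annular noncrossing permutations, so that the factors carrying $[c]_q$ in a denominator specialize without any division by zero.
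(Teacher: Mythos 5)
Your proposal is correct and takes exactly the paper's approach: the paper obtains this corollary by applying Theorem~\ref{thm:mcsp} to the annular rotation of order $1$ (the identity action), so that each cardinality equals the corresponding polynomial evaluated at $q=1$. Your explicit $q=1$ evaluations of $\Kre$, $\Nara_1$, $\Nara_2$, $\Nara_3$, and $\Cat(n,m)$ are just the routine details the paper leaves implicit.
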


If an annular noncrossing permutation is invariant under an annular rotation of
order $2$, it is called an \emph{annular noncrossing permutation of type $B$}.
We define $\anc_B(n,m)$ to be the set of connected $(2n,2m)$-annular noncrossing
permutations of type $B$.  We then define
\begin{align*}
\anc_B(n,m;c) &= \anc_B(n,m) \cap \anc(2n,2m;2c),\\
\anc_B(n,m;c,r,s) &= \anc_B(n,m) \cap \anc(2n,2m;2c,2r,2s),\\
\anc_B(n,m;c,r,s,R,S) &= \anc_B(n,m) \cap \anc(2n,2m;2c,2r,2s,2R,2S),\\
\anc_B(n,m;c,r,s,R,S;\alpha,\beta,\lambda,\mu)
&=\anc_B(n,m) \cap \anc(2n,2m;2c,2r,2s,2R,2S;2\alpha,2\beta,2\lambda,2\mu),
\end{align*}
where $2\lambda = (2\lambda_1,2\lambda_2,\dots)$.

Notice that every connected annular noncrossing permutation of type $B$ contains
at least two connected cycles.  Thus annular noncrossing permutations of type
$B$ are in bijection with annular noncrossing partitions of type $B$.

Considering an annular rotation of order $2$ in Theorem~\ref{thm:mcsp} we obtain
the following.

\begin{cor}
We have
\begin{align}
\#\anc_B(n,m;c,r,s,R,S;\alpha,\beta,\lambda,\mu)
&=\frac{2(n-R)(m-S)}{c}  \binom{n}{r}\binom{m}{s} 
\binom{r}{\alpha} \binom{s}{\beta} \binom{c}{\lambda} \binom{c}{\mu},\\
\#\anc_B(n,m;c,r,s,R,S)
&=2c \binom{n}{r}\binom{m}{s}
\binom{R-1}{r-1} \binom{S-1}{s-1} \binom{n-R}{c} \binom{m-S}{c},\\
\label{eq:b1}
\#\anc_B(n,m;c,r,s)
&= 2c \binom{n}{r}\binom{m}{s} \binom{n}{r+c} \binom{m}{s+c},\\
\label{eq:b2}
\#\anc_B(n,m;c)
&= 2c \binom{2n}{n-c} \binom{2m}{m-c},\\
\label{eq:b3}
\#\anc_B(n,m)
&= \frac{nm}{m+n} \binom{2n}{n} \binom{2m}{m}.
\end{align}
\end{cor}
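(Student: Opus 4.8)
The plan is to read each of the five identities off Theorem~\ref{thm:mcsp} by specializing to an annular rotation of order $2$. Fix the primitive square root of unity $w=-1$, and let $(c_1,c_2)$ be a $(2n,2m)$-annular rotation of order $2$ generating the cyclic group $C$. By the very definitions of the sets $\anc_B(\dots)$, an element lies in $\anc_B(n,m;c,r,s,R,S;\alpha,\beta,\lambda,\mu)$ exactly when it is a point of $\anc(2n,2m;2c,2r,2s,2R,2S;2\alpha,2\beta,2\lambda,2\mu)$ fixed by $(c_1,c_2)$, and similarly for the four coarser sets. Hence by the CSP established in Theorem~\ref{thm:mcsp} together with the definition of the CSP, each cardinality $\#\anc_B(\dots)$ equals the corresponding polynomial among $\Kre,\Nara_1,\Nara_2,\Nara_3,\Cat$, with all parameters doubled, evaluated at $q=w=-1$. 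It therefore remains only to carry out these five evaluations.

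Each evaluation rests on Lemma~\ref{lem:eval}: since $d=2$ divides every integer occurring as an argument after doubling (for instance $2n$, $2m$, $4nm$, $2n+2m$, $2c$, $4c$, $2r$, $2r+2c$, $2n-2R$), we have $[2a]_{q=-1}=a$ and $\Qbinom{2a}{2b}{q=-1}=\binom{a}{b}$. For example $\Cat(2n,2m)=\frac{[4nm]_q}{2[2n+2m]_q}\qbinom{4n}{2n}\qbinom{4m}{2m}$ becomes, at $q=-1$, the value $\frac{2nm}{2(n+m)}\binom{2n}{n}\binom{2m}{m}=\frac{nm}{n+m}\binom{2n}{n}\binom{2m}{m}$, which is~\eqref{eq:b3}; the identities~\eqref{eq:b1}, \eqref{eq:b2}, and the $\anc_B(\dots;R,S)$ formula follow from evaluating $\Nara_2,\Nara_3,\Nara_1$ in the same fashion. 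The one point needing care is the factor $\qbinom{2R-1}{2r-1}\qbinom{2S-1}{2s-1}$ in $\Nara_1$, whose arguments are odd and so escape Lemma~\ref{lem:eval} directly; here I would first rewrite $\qbinom{2R-1}{2r-1}=\frac{[2r]_q}{[2R]_q}\qbinom{2R}{2r}$ and then apply Lemma~\ref{lem:eval}, giving the value $\frac{r}{R}\binom{R}{r}=\binom{R-1}{r-1}$ at $q=-1$, and likewise for the $(S,s)$ factor. In each of these four cases the prefactor $q^Xq^Yq^Z$ plays no role, since $X=2c(2c-1)$ is even and, after doubling, $Y$ and $Z$ are manifestly divisible by $4$.

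For the finest identity I would evaluate $\Kre(2n,2m;2c,2r,2s,2R,2S;2\alpha,2\beta,2\lambda,2\mu)$ at $q=-1$. The $q$-integers and $q$-binomials behave exactly as above and produce $\frac{2(n-R)(m-S)}{c}\binom{n}{r}\binom{m}{s}\binom{r}{\alpha}\binom{s}{\beta}\binom{c}{\lambda}\binom{c}{\mu}$; the only remaining issue is the sign contributed by $q^Xq^Yq^Zq^W$, and I expect this to be the main (if modest) obstacle. Evenness of $X,Y,Z$ after doubling is clear as noted, so the delicate term is $W$, which involves $\tau(2\alpha),\tau(2\beta),\tau(2\lambda),\tau(2\mu)$. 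The key observation is that doubling the multiplicities of a partition $\nu$ doubles every part of the conjugate, so the conjugate of $2\nu$ has $i$th part $2\nu_i'$ and hence
\[
\tau(2\nu)=\sum_{i\ge1}(2\nu_i')(2\nu_{i+1}')=4\tau(\nu).
\]
Thus each $\tau$-term in $W$ becomes divisible by $4$, while the remaining summands of $W$ are also divisible by $4$ after doubling, so $W$ is even and $q^Xq^Yq^Zq^W\big|_{q=-1}=1$. This yields the first identity. As a cross-check, I would note that this finest identity can alternatively be read off directly from Theorem~\ref{thm:rot_inv} with $d=2$, which bypasses the parity analysis of $W$ altogether.
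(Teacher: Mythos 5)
Your proposal is correct and is essentially identical to the paper's proof, which likewise obtains all five identities in one stroke by applying Theorem~\ref{thm:mcsp} to the annular rotation of order $2$, i.e., evaluating $\Kre$, $\Nara_1$, $\Nara_2$, $\Nara_3$, $\Cat$ with doubled parameters at $q=-1$. The paper leaves the root-of-unity evaluations entirely implicit, so your detailed verification --- the rewriting $\qbinom{2R-1}{2r-1}=\frac{[2r]_q}{[2R]_q}\qbinom{2R}{2r}$, the parity of $X,Y,Z,W$, and the identity $\tau(2\nu)=4\tau(\nu)$ under doubled multiplicities (the correct reading of the paper's $2\lambda$) --- simply fills in the computations the paper omits.
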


We note that \eqref{eq:anc_cat} was first proved by Mingo and Nica
\cite[Corollary~6.7]{Mingo2004} and \eqref{eq:b1}, \eqref{eq:b2}, and
\eqref{eq:b3} were first proved by Goulden, Nica and Oancea \cite[Equations
(4.6), (4.7), (4.9)]{GNO}.

\section{Annular noncrossing matchings} 
\label{sec:annul-noncr-match}

An \emph{$(n,m)$-annular noncrossing matching} is a complete matching on $[n+m]$
which can be drawn in an $(n,m)$-annulus without crossing.  By considering each
matching pair $(i,j)$ as a cycle of size $2$, one can identify an annular
noncrossing matching as an annular noncrossing permutation consisting of cycles
of size $2$ only.

\begin{thm}\label{thm:ANM}
  Suppose $n\equiv m\equiv c \mod 2$.  The number of $(n,m)$-annular noncrossing
  matchings exactly $c$ connected matching pairs is
\[
c \binom{n}{\frac{n-c}{2}} \binom{m}{\frac{m-c}{2}}.
\]
\end{thm}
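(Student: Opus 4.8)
The plan is to recognize an $(n,m)$-annular noncrossing matching as precisely an annular noncrossing permutation all of whose cycles have size $2$, read off the unique parameter tuple $(c,r,s,R,S;\alpha,\beta,\lambda,\mu)$ that a matching with $c$ connected pairs must carry, and then apply the explicit count \eqref{eq:anc}. In a matching every cycle has size $2$, so a connected cycle uses exactly one exterior and one interior point and hence has exterior size $1$ and interior size $1$; with $c$ connected cycles this forces the connected exterior and interior cycle types to be $\lambda=\mu=(1^c)$. An exterior cycle is a matched pair of exterior points, of size $2$, so if there are $r$ of them then $\alpha=(2^r)$ and $R=2r$; symmetrically $\beta=(2^s)$ and $S=2s$ for the $s$ interior cycles.

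Next I would pin down $r$ and $s$ from the point counts. The $n$ exterior points split into the $c$ exterior endpoints of the connected cycles and the $2r$ points of the exterior cycles, so $n=c+2r$ and $r=(n-c)/2$; likewise $m=c+2s$ gives $s=(m-c)/2$. The hypothesis $n\equiv m\equiv c \pmod 2$ is exactly what makes $r$ and $s$ nonnegative integers, so these matchings correspond to a single well-defined tuple and one term of \eqref{eq:anc} will suffice rather than a sum.

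Finally I would substitute into \eqref{eq:anc}. Since $R=n-c$ and $S=m-c$ we have $n-R=m-S=c$, so the prefactor $(n-R)(m-S)/c$ equals $c$. Each of $\binom{r}{\alpha}$, $\binom{s}{\beta}$, $\binom{c}{\lambda}$, $\binom{c}{\mu}$ equals $1$, since for a partition whose parts are all equal, such as $\alpha=(2^r)$, the coefficient is $r!/r!=1$. What remains is $c\binom{n}{(n-c)/2}\binom{m}{(m-c)/2}$, the claimed formula. There is no genuine obstacle beyond bookkeeping; the only point needing care is verifying that a matching forces exactly these parameter values, so that no summation over the remaining parameters is required.
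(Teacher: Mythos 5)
Your proposal is correct and is essentially the paper's own proof: both identify the matching as an annular noncrossing permutation with all cycles of size $2$, pin down the parameters $\alpha=(2^r)$, $\beta=(2^s)$, $\lambda=\mu=(1^c)$ with $R=n-c$, $S=m-c$, and substitute into \eqref{eq:anc}. Your write-up is just more explicit about the bookkeeping (the paper compresses this to one sentence), and in fact uses the parameter names more consistently with the definition of $\anc(n,m;c,r,s,R,S;\alpha,\beta,\lambda,\mu)$ than the paper's proof does.
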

\begin{proof}
  Such an annular noncrossing matching can be considered as an annular
  noncrossing permutation with $\alpha=(2^R)\in \Par(2R,R)$,
  $\beta=(2^S)\in \Par(2S,S)$, $\lambda=\mu=(1^c)\in \Par(c,c)$, and
  $n-2R=m-2S=c$.  Then we obtain the formula immediately from \eqref{eq:anc}.
\end{proof}

We can also obtain the total number of connected annular noncrossing matchings.

\begin{thm}
  For $n\equiv m\mod 2$, the number of connected $(n,m)$-annular noncrossing
  matchings is
\[
\frac{2\ceil{\frac n2}\ceil{\frac m2}}{n+m}
\binom{n}{\ceil{\frac n2}} \binom{m}{\ceil{\frac m2}}.
\]
\end{thm}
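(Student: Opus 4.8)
The plan is to sum the exact count from Theorem~\ref{thm:ANM} over all admissible numbers of connected pairs, and then to recognise the resulting sum as the specialization at $q=1$ of Lemma~\ref{lem:sum1}.

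First I would note that a connected $(n,m)$-annular noncrossing matching has some number $c\ge1$ of connected matching pairs, and that by Theorem~\ref{thm:ANM} such matchings exist only when $c\equiv n\equiv m\pmod 2$. Summing that theorem's formula over the relevant $c$, the number of connected matchings equals
\[
\sum_{\substack{c\ge 1\\ c\equiv n \pmod 2}} c\binom{n}{\frac{n-c}{2}}\binom{m}{\frac{m-c}{2}},
\]
a finite sum because the binomials vanish once $c>\min(n,m)$.

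Since $n\equiv m\pmod 2$, I would let $k\in\{0,1\}$ be their common parity and write $n=2N+k$, $m=2M+k$. The substitution $c=2\gamma+k$ then gives $\frac{n-c}{2}=N-\gamma$ and $\frac{m-c}{2}=M-\gamma$, so the sum becomes
\[
\sum_{\gamma\ge 0}(2\gamma+k)\binom{2N+k}{N-\gamma}\binom{2M+k}{M-\gamma}.
\]
Here summing from $\gamma=0$ is legitimate: for $k=1$ the condition $c\ge1$ is exactly $\gamma\ge0$, while for $k=0$ the spurious term $\gamma=0$ (that is, $c=0$) contributes nothing since its coefficient $2\gamma+k$ is zero. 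This is precisely the left-hand side of Lemma~\ref{lem:sum1} at $q=1$, so it evaluates to
\[
\frac{(N+k)(M+k)}{N+M+k}\binom{2N+k}{N+k}\binom{2M+k}{M+k}.
\]

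Finally I would translate back using $\ceil{\frac n2}=N+k$, $\ceil{\frac m2}=M+k$, $N+M+k=\frac{n+m}{2}$, and $\binom{2N+k}{N+k}=\binom{n}{\ceil{\frac n2}}$ (similarly for $m$), which turns the expression above into the claimed $\frac{2\ceil{\frac n2}\ceil{\frac m2}}{n+m}\binom{n}{\ceil{\frac n2}}\binom{m}{\ceil{\frac m2}}$. I expect no real obstacle here; the only point needing a line of justification is that Lemma~\ref{lem:sum1}, an identity of rational functions in $q$, may be evaluated at $q=1$, which is valid because its denominator $[n+m+k]_q$ is regular there (as $n+m+k>0$). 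The genuine work is just the parity bookkeeping in the reindexing $c=2\gamma+k$.
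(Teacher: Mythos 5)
Your proposal is correct and follows essentially the same route as the paper: both reindex by the common parity $k\in\{0,1\}$ via $n=2N+k$, $m=2M+k$, sum the formula of Theorem~\ref{thm:ANM} over $c=2\gamma+k$, and invoke Lemma~\ref{lem:sum1} at $q=1$. Your additional remarks (the vanishing of the spurious $c=0$ term when $k=0$, and the legitimacy of specializing the $q$-identity at $q=1$) are careful elaborations of steps the paper leaves implicit.
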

\begin{proof}
  We will prove the equivalent statement: for $k\in \{0,1\}$, the number
  of connected $(2n+k,2m+k)$-annular noncrossing matchings is
\[
\frac{(n+k)(m+k)}{n+m+k}
\binom{2n+k}{n+k} \binom{2m+k}{m+k}.
\]
By Theorem~\ref{thm:ANM} the number is equal to 
\[
\sum_{c\ge0} (2c+k) \binom{2n+k}{n-c}\binom{2m+k}{m-c}. 
\]
Then we are done by Lemma~\ref{lem:sum1} for $q=1$.
\end{proof}

\section*{Acknowledgement}
The author would like to thank Vic Reiner for providing his paper with Sommers
\cite{ReinerSommers}, suggesting the problem of finding a bicyclic sieving
phenomenon on annular noncrossing permutations, and for fruitful comments. He
also thanks Dennis Stanton for helpful discussion.


\end{document}